\newtheorem{defn}{Definition}
\newtheorem{theorem}{Theorem}
\newtheorem{remark}[theorem]{Remark}
\newtheorem{example}[theorem]{Example}
\theoremstyle{definition}
\newtheorem{as}{Assumption}[section]
\newcommand{\D}{\mathrm{d}}
\newcommand{\vx}{\textbf{\textit{x}}}
\newcommand{\ve}{\textbf{\textit{e}}}
\newcommand{\vv}{\textbf{\textit{v}}}
\newcommand{\vuu}{\textbf{\textit{u}}}
\newcommand{\supp}{\mathrm{supp}}
\newcommand{\sphere}{\mathbb{S}}
\title{The head-wave ray transform}
\author[de Hoop]{Maarten V. de Hoop}
\address{Simons Chair in Computational and Applied Mathematics and Earth Science, Rice University, Houston, TX, USA (\tt{mdehoop@rice.edu})}
\author[Kykk\"anen]{Antti Kykk\"anen}
\address{Department of Computational Applied Mathematics and Operations Research, Rice University, Houston, TX, USA (\tt{ak272@rice.edu})}
\author[Mishra]{Rohit Kumar Mishra}
\address{Department of Mathematics, Indian Institute of Technology, Gandhinagar, Gujarat, India, (\tt{rohit.m@iitgn.ac.in})}
\date{\today}
\newcommand{\R}{{\mathbb R}}
\newcommand{\N}{{\mathbb N}}
\newcommand{\abs}[1]{\left\lvert#1\right\rvert}
\newcommand{\diver}{\mathrm{div}}
\begin{document}

\begin{abstract}
We introduce and study a new integral ray transform called the head-wave transform. The head-wave transform integrates a function along a piecewise linear (in general geodesic) path consisting of three parts. The geometry of such paths corresponds to ray paths of head-waves propagating in a medium with sharp changes in sound speed. The middle part of the ray paths corresponds to gliding along the so-called gliding surface. As our main results, we prove inversion formulas and null space characterizations under multiple different sets of assumptions on the geometry of the gliding surface and the integrand function.
\end{abstract}

\maketitle

\section{Introduction}
\label{sec:introduction}

We introduce and study a new kind of integral ray transform, which we call the head-wave transform. The head-wave transform integrates a function along a piecewise linear (in general geodesic) path consisting of three parts. Such ray transforms are connected to several types of ray transforms known in the literature, but in general cannot be reduced to any of the well-understood ones (See Section~\ref{sec:connections-to-other-transforms}).

The ray paths are connected to propagation of head-waves as described in \cite{CR1971}. The particular shape of the ray path arises from a sharp change in the sound speed of the medium (See Section~\ref{sec:head-waves-ala-cerveny}). If a propagating wave hits what we call a gliding surface at a critical angle, then the wave starts gliding along the surface and is later refracted back off it. The angles between the parts of the ray path and the gliding surface can vary and depend on the point where the ray hits the surface. In this article, we restrict attention to a single gliding surface in $\R^n$, which can either be flat or curved.

We prove several inversion formulas and null space characterizations for the head-wave transform under different sets of assumptions. Our proofs are mostly based on elementary techniques, but the geometry of head-waves and in particular, the variability of the angles between parts of the rays are completely novel in the literature. This article can be seen as an initial foray into the study of integral geometry and inverse problems related to the geometry of head-wave propagation.

\subsection{The head-wave transform}
\label{sec:definition}

In this section, we define the head-wave transform in $\R^n$ in the case when the gliding surface is flat dimension $n-1$ subspace, which we take to be $\R^{n - 1} \times \{0\} \approx \R^{n-1}$. Points in $\R^n$ will be denoted by $\vx = (\vx',x_n)$ where $\vx' \in \R^{n-1}$ and $x_n \in \R$. We will later on generalize the definition to account for cases when the gliding surface (or a codimension $1$ submanifold) is curved (See Section~\ref{sec:gliding-on-general-surface}).

As mentioned earlier, our rays of interest consist of three parts. The first and the last parts are straight lines in $\R^n$ and their directions are described by two smooth vector fields $\vuu, \vv \colon \R^{n-1} \times \sphere^{n-2} \to \sphere^{n-1}$. The vectors $\vuu$ and $\vv$ are assumed to have unit length. The middle part of the ray is a line segment in the gliding surface in $\R^n$.
We will have three parameters parametrizing our rays. Namely, a starting point $\vx' \in \R^{n-1}$, a gliding parameter $d \in \R_+$ and a unit vector $\theta \in \sphere^{n-2}$ in $\R^{n-1}$.
The gliding parameter $d$ signifies the length of the glide on the surface, and $\theta$ signifies the angle of the refracted ray.

We impose an additional requirement that the entire ray path lies in the plane spanned by the vectors $\vuu(\vx',\theta)$ and $\vv(\vx'+d\theta,\theta)$ for any fixed parameters $\vx'$, $d$ and $\theta$. This is equivalent to requiring that
\begin{equation}
\label{eqn:v-u-prime}
\vuu'(\vx',\theta)
=
\lambda_\vuu(\vx')\theta
\quad\text{and}\quad
\vv'(\vx',\theta)
=
\lambda_\vv(\vx')\theta
\end{equation}
for all $(\vx',\theta) \in \R^{n-1} \times \sphere^{n-2}$ where $\lambda_\vuu$ and $\lambda_\vv$ are smooth functions on $\R^{n-1}$. Additionally, assume that
\begin{equation}
\lambda_\vuu(\vx') < 0,
\quad
u_n(\vx',\theta) > 0,
\quad
\lambda_\vv(\vx') > 0
\quad\text{and}\quad
v_n(\vx',\theta)> 0
\end{equation}
for all $\vx' \in \R^{n-1}$ and $\theta \in \sphere^{n-2}$. Due to these assumptions the entire ray lies in the half-space $\R^n_+ = \R^{n-1} \times \R_+$ where $\R_+ = \{x \in \R\,:\,x \geq 0\}$. Thus, for the head-wave transform, it makes sense to restrict the support of the functions to this half-space.

With this setup, we are ready to define the integral transform of our interest.

\begin{defn}\label{def:refraction transform}
Let $\vuu, \vv \colon \R^{n-1} \times \sphere^{n-2} \to \R^n$ be smooth vector fields as above and let $f \colon \R^n_+ \to \R$ be a smooth function.
We define the head-wave transform $\mathcal{R}_{\vuu,\vv}f$ of $f$ as the map $\mathcal{R}_{\vuu,\vv}f:\R^{n-1} \times \R_+ \times \sphere^{n-2} \to \R$ defined by 
\begin{equation}
\label{eq:refraction transform}
\mathcal{R}_{\vuu,\vv}f(\vx',d,\theta)
\coloneqq
\int_0^\infty
f((\vx',0) + t\vuu(\vx',\theta))
\,\D t
+
\int_0^d
f(\vx'+t\theta,0)
\,\D t
+
\int_0^\infty
f((\vx'+d\theta,0)+t\vv(\vx'+d\theta,\theta))
\,\D t
\end{equation}
for all $\vx' \in \R^{n-1}$, $d \in \R_+$ and $\theta \in \sphere^{n-2}$.
\end{defn}

When no confusion can arise, we omit the reference to the vector fields $\vuu$ and $\vv$ and simply denote the head-wave transform by $\mathcal{R}$.
Using the assumptions made on the vector fields $\vuu$ and $\vv$ we can write the head-wave transform as
\begin{equation}
\begin{split}
\mathcal{R}f(\vx',d,\theta)
&=
\int_0^\infty
f(\vx'+t\lambda_\vuu(\vx')\theta,tu_n(\vx',\theta))
\,\D t
+
\int_0^d
f(\vx'+t\theta,0)
\,\D t
\\
&\quad+
\int_0^\infty
f(\vx'+(d+t\lambda_\vv(\vx'+d\theta))\theta,tv_n(\vx'+d\theta,\theta))
\,\D t.
\end{split}
\end{equation}

The head-wave transform integrates a scalar function over a piecewise linear (geodesic) trajectory given in Figure~\ref{fig:curve of integration}.

\begin{figure}[hbt]
    \centering
    \includegraphics[width=1\linewidth]{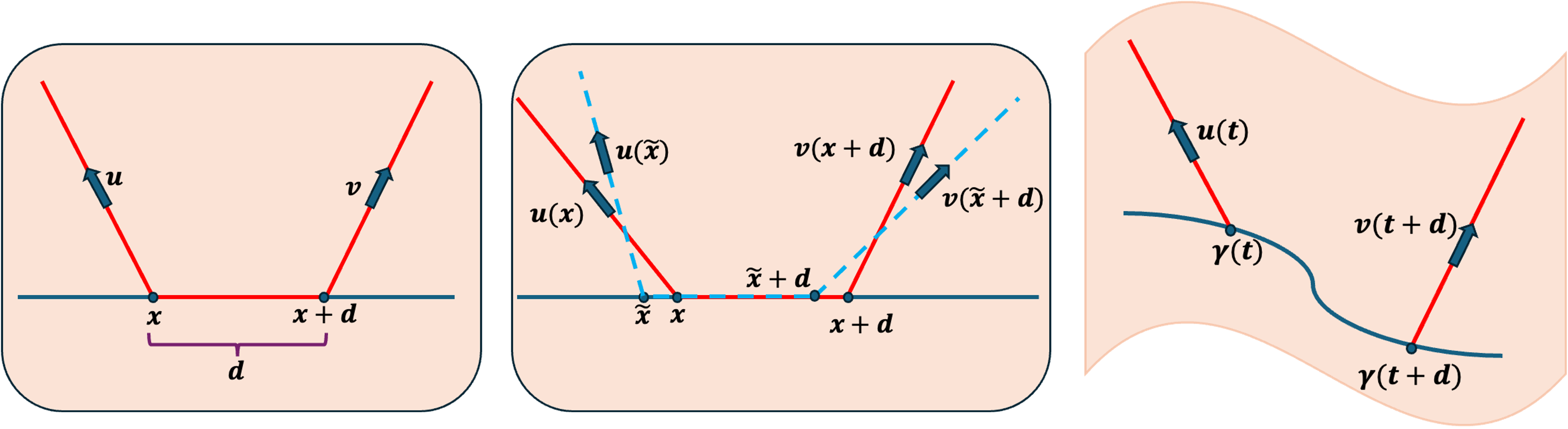}
    \caption{The setups of our theorems are built from ground up. We start from a simpler setup and increase the complexity of the ray paths step-by-step. First, we study the simplest case where the vectors $\vuu$ and $\vv$ are constant (left). Then, in the second step, we allow these vector fields to be variable (middle) and finally, in the third step, we allow the gliding surface (curve) to take more general shapes (right).}
    \label{fig:curve of integration}
\end{figure}

\subsection{Main results}
\label{sec:main-thms}

Our primary interest is to recover an unknown function $f$ from its head-wave transform. In dimensions $n \geq 3$, our problem is formally overdetermined as we are trying to find an $n$-dimensional object $f$ from a $(2n-2)$-dimensional data. Despite overdeterminacy, we expect our transform not to be injective in general, since our integral data contains information in two prescribed directions. In this article, we study two types of questions, which we discuss here briefly:
\begin{enumerate}
    \item[(i)] \label{item:question-1} \textbf{Inversion formula}: Under what assumptions on the unknown function $f$, can it be uniquely and explicitly recovered from the knowledge of its head-wave transform $\mathcal{R}f$?
    \vspace{2mm}
    \item[(ii)] \textbf{Null space description}: In general, the head-wave transform $\mathcal{R}$ is not injective and therefore it is natural to identify the obstruction to the injectivity. In other words, we would like to identify the null space of the head-wave transform $\mathcal{R}$.
\end{enumerate}
We provide answers to both the questions under several different sets of assumptions on the geometry of the vector fields $\vuu$ and $\vv$ and on the function $f$. First, we record various geometric assumptions on vector fields $\vuu$ and $\vv$ that will be used at different places in this article. These assumptions are listed in Assumption~\ref{as:A} and~Assumption~\ref{as:B} as follows:
\begin{as}
\label{as:A}
These assumptions will be needed to derive inversion formulas to recover a function. The first two conditions, (A1) and (A2), are related to the case when the gliding surface is $\R^{n-1} \times \{0\} \subseteq \R^n$. The third condition, (A3), is required in the case when gliding happens along a smooth curve $\gamma \colon \R \to \R^2$.

Notice that in (A1), since $n = 2$, the possible choices for the vector $\theta$ as defined earlier in this section are $\theta = 1$ and $\theta = -1$. Thus this assumption restricts set up even more. The added assumption is meaningful since in the physical setting described in Section~\ref{sec:head-waves-ala-cerveny} there would be waves coming in from the right (corresponding to $\theta = 1$) and then starting to propagate back to the right.

The first condition (A1) is satisfied if both of the vectors $\vuu$ and $\vv$ do not point along the gliding surface and neither are they normal to it, and the derivatives are non-zero and have the same sign. The second condition (A2) can be interpreted in a similar way and the third condition (A3) is the generalization of this in the case that the gliding surface is no longer flat.
\begin{enumerate}
    \item[(A1)] For $n = 2$, the vector fields $\vuu$ and $\vv$ satisfy
    \begin{equation}
    u_1(x) < 0,
    \quad
    u_2(x) > 0,
    \quad
    v_1(x) > 0,
    \quad
    v_2(x) > 0
    \end{equation}
    and
    \begin{equation}
    u'_1(x)v_1(x)(1-v_1(x))
    +
    v'_1(x)u_1(x)(1-u_1(x))
    \neq
    0
    \end{equation}
    for all $x \in \R$.
\vspace{2mm}
    \item[(A2)]
    Let $n \geq 3$. The vector fields $\vuu = (\vuu',u_n)$ and $\vv = (\vv',v_n)$ satisfy $\vuu'(\vx',\theta) = \lambda_\vuu(\vx')\theta$ and $\vv'(\vx',\theta) = \lambda_\vv(\vx')\theta$ for all $\vx'\in \R^{n-1}$ and $\theta\in \sphere^{n-2}$ where $\lambda_\vuu$ and $\lambda_\vv$ are smooth functions on $\R^{n-1}$. It holds that
    \begin{equation}
    \lambda_\vuu(\vx') < 0,
    \quad
    u_n(\vx',\theta) > 0,
    \quad
    \lambda_\vv(\vx') > 0
    \quad\text{and}\quad
    v_n(\vx',\theta) > 0
    \end{equation}
    and for a fixed $\theta_0 \in \sphere^{n-2}$ it holds that
    \begin{equation}
    (1 - \lambda_{\vv}(\vx'))
    \lambda_{\vv}(\vx')
    (\theta_0 \cdot \nabla_{\vx'}\lambda_{\vuu}(\vx'))
    +
    (1 - \lambda_{\vuu}(\vx'))
    \lambda_{\vuu}(\vx')
    (\theta_0 \cdot \nabla_{\vx'}\lambda_{\vv}(\vx'))
    \neq
    0
    \end{equation}
    for all $\vx' \in \R^{n-1}$.
\vspace{2mm}
    \item[(A3)] The vector fields $\vuu$ and $\vv$ along $\gamma$ satisfy
    \begin{equation}
    \vuu(t) \cdot \dot\gamma(t) < 0,
    \quad
    \vuu(t) \cdot \dot\gamma^\perp(t) > 0,
    \quad
    \vv(t) \cdot \dot\gamma(t) > 0
    \quad\text{and}\quad
    \vv(t) \cdot \dot\gamma^\perp(t) > 0
    \end{equation}
    and
    \begin{equation}
    -
    \left(
    \frac{u_1'(t_0)}{u_1(t_0)^2}
    +
    \frac{v_1'(t_0)}{v_1(t_0)^2}
    \right)
    \left(
    1
    -
    \frac{\gamma_1'(t_0)}{v_1(t_0)}
    \right)
    -
    \frac{v_1'(t_0)}{v_1(t_0)^2}
    \left(
    \frac{1}{u_1(t_0)}
    +
    \frac{1}{v_1(t_0)}
    \right)
    \neq
    0
    \end{equation}
    for all $t \in \R$.
\end{enumerate}

\end{as}
\begin{as}
\label{as:B}
To describe the null space of the head-wave transform, we will require the following different set of assumptions. The first two conditions, (B1) and (B2), are needed for the $2$-dimensional case, whereas the third condition, (B3), is related to higher-dimensional cases.
\vspace{2mm}
\begin{enumerate}
    \item[(B1)] The vector fields $\vuu$ and $\vv$ are extendable to $\R^2$ so that the integral curves of the extensions are straight lines.
\vspace{2mm}
    \item[(B2)] The vector fields $\vuu$ and $\vv$ are always linearly independent.
\vspace{2mm}
    \item[(B3)] For $n \geq 3$, the vector fields $\vuu$ and $\vv$ are independent of the variable $\vx' \in \R^{n-1}$ i.e. $\vuu$ and $\vv$ are functions of the type $\sphere^{n-1} \to \R^n$. 
\end{enumerate}
\end{as}
\noindent With these assumptions, we are now ready to record our main results with corresponding geometric assumptions in a Table~\ref{tab:all-results} below. 
\begin{table}[htb]
    \centering
    \begin{tabular}{c|c|c|c|}
    \cline{2-4}
     & Dimension & Assumptions on $\vuu$ and $\vv$ & Result \\ \hline\hline
     \multicolumn{1}{|c||}{Inversion formula} & \multicolumn{1}{c|}{$n = 2$} & (A1) & Theorem~\ref{thm:inversion-formula-2d} \\ 
     \multicolumn{1}{|c||}{ } & $n = 2$ & (A3) & Theorem~\ref{thm:inversion-smooth-curve} \\
     \multicolumn{1}{|c||}{ } & $n \geq 3$ & (A2) & Theorem~\ref{thm:inversion-formula-fixed-angle}
     \\ \hline
    \multicolumn{1}{|c||}{Null space description} & $n = 2$ & (B1) and (B2) & Theorem~\ref{thm:gauge-2d-var-angles} \\
     \multicolumn{1}{|c||}{ } & $n = 2$ & (B1) and (B2) & Theorem~\ref{thm:gauge-curved-2d} \\
     \multicolumn{1}{|c||}{ } & $n \geq 3$ & (B3) & Theorem~\ref{th:Gauge invariance fixed theta}\\
     \hline
    \end{tabular}
\caption{Geometric assumptions on the vector fields $\vuu$ and $\vv$. The assumption (A1)-(A3) and (B1)-(B3) are listed. In addition to the geometric assumptions, our theorems require certain assumptions for the function $f$, which can be found in the individual statements of the theorems.}
\label{tab:all-results}
\end{table}

\noindent We prove our main results in three steps. First in Section~\ref{sec:refraction-plane}, we begin by restricting ourselves to $n=2$, and then move to the general case in Section~\ref{sec:general-dimensions}. Because of our geometric assumptions on the vector fields $\vuu$ and $\vv$, we will be able to use slice-by-slice reconstruction using the results from $n=2$ for some of the general case $n \geq 2$. In Section~\ref{sec:gliding-on-general-surface} we generalize the $2$-dimensional results to cases where gliding happens on a curve.

\renewcommand{\theas}{\Alph{as}}

\subsection{Tomography and head-waves}
\label{sec:head-waves-ala-cerveny}

Consider a layered media consisting of two layers $U$ (upper)  and $L$ (lower) separated by an interface. Each layer is equipped with a sound speed. Assume that the sound speed in $U$ is slower than the sounds speed in $L$. When a seismic wave traveling in the slower layer $U$ encounters the interface with the faster layer $L$, there is a critical angle of incidence. At this angle, the refracted wave propagates along the interface at the higher speed of the layer $L$. As the wave propagates along the interface, it continuously radiates energy back into the layer $U$, producing a wavefront that returns to the surface --- this is called the head-wave.

In the ray picture corresponding to travel time measurements, the ray paths of head waves are formed of three parts: two roughly linear paths when the wave is traveling $U$ and a curve along the interface. From the point of view of our ray geometric setting, it is reasonable to assume data along rays for any gliding parameter $d \in \R_+$, since the wave radiates energy back to the surface continuously. In practice, this we would assume the travel times to be sufficiently densely sampled.

Seismic imaging with head-waves is sometimes called refraction tomography. This imaging modality uses travel times of head-waves recorded at the surface of the planet to determine the material parameters under the surface. Our inverse problem for head-wave transform is the corresponding linear inverse problem. The geophysical literature on seismic refraction goes back to at least~\cite{Hagedoorn1954,Hagedoorn1959}.

Usually one assumes that the seismic refraction imaging is done to a shallow enough region below the surface to that the material parameters can be assumed constant in depth. Our reconstruction formulas employ the corresponding assumption i.e. the function to be recovered is constant in the distance to the gliding surface. However, we do not make such assumption for proving null space characterizations.

The paths of integration in the head-wave transform are the ray paths given by propagation of singularities solutions to the scalar wave equation. It has been shown in the flat case~\cite{deHoop1990} that when the incidence angle with the gliding surface is critical then singularities start to propagate along the gliding surface. The general, non-flat, case has been considered in~\cite{CR1971} showing that the same should be true also in this case. No rigorous prove is given however. A formula for the critical angle is given in~\cite[Chapter 3, Equation (3.12)]{CR1971} and is compatible with assumptions (A1)-(A3) we make for the vectors $\vuu$ and $\vv$ in this article.

In addition to propagation of singularities one should keep track of how the wave is polarized along the ray path. Computations in~\cite{CR1971} suggest that as the head-wave hits the gliding surface at the critical angle the polarization can change.

This article contains an introduction to head-wave geometry and to mathematical inverse problems arising from imaging with head-waves. There are many possible directions for generalizations and variations of the mathematical problems. One can consider travel time problems where the data consist of lengths of gliding head-wave paths. Given this data one could try to determine the topology and the geometry of the gliding surface or one could try to reconstruct the vector $\vuu$ and $\vv$ describing the critical angles. Also, one could consider more general integral ray transform problems to generalize the linear problem.

\subsection{Connections to other kinds of ray transforms}
\label{sec:connections-to-other-transforms}

For $d =0$ and $\vuu$, $\vv$ constants, the above head-wave transform reduces to the well-known V-line/broken-ray transform of scalar function $f$ with vertices restricted to the $x$-axis. The V-line transform naturally arises in the field of integral geometry with scattering. The inversion problem for the V-line transform over scalar fields has been studied by several research groups in various geometries, for instance, see \cite{Ambartsoumian2012, ALJ2019, Florescu-Markel-Schotland, GZ2014, Kats_Krylov-13, Terz-review-18} and references therein. Recently, various generalizations of V-line/broken-ray transforms have been studied over higher-order tensor fields in different geometric settings, see \cite{AJM2020, AMZ2024, BMV2025, Ilmavirta_Parternain, MPZ2025}. For a complete literature review on the subject, please refer to a recent book by Gaik Ambartsoumian \cite{amb-book}.

In the absence of one of the ray directions ($\vuu$ or $\vv$), the V-line transform can be thought of as the classical divergent beam transform, which integrates scalar functions or, more generally, symmetric $m$-tensor fields over a beam starting from a point in the space. Please refer \cite{HSSW1980, Venky_Divergent_Beam_2025, Kuchment_Terzioglu} for works related to divergent beam transforms over scalar functions and symmetric $m$-tensor fields, respectively. Another related internal transform is the X-ray transform that integrates a function/tensor fields over straight lines, please refer \cite{Denisjuk_Paper, FLU, Helgason2011, PSU2023,  Sharafutdinov1994, Sharafutdinov_2021}. In certain situations, the head-wave transform can be used to determine the X-ray transform on the gliding surface.

\subsection*{Acknowledgements}

MVdH was supported by the National Science Foundation under grant DMS-2407456, the Simons Foundation under the MATH + X program and the corporate members of the Geo-Mathematical Imaging Group at Rice University. We thank the anonymous referees for the valuable remarks and suggestions.

\section{Proofs of the main results}
\label{sec:refraction-plane}

We begin by proving results for the head-wave transform in the plane $\R^2_+$. These results will later be used to prove results in higher dimensions in Section~\ref{sec:general-dimensions}.

\subsection{Inversion formulas in the plane}

For $n = 2$, the vector fields $\vuu$ and $\vv$ are of the type $\R \to \sphere^{n-1} \setminus \{(1,0),(-1,0)\}$ which means that
\begin{equation}
\vuu(x)
=
(u_1(x),u_2(x))
\quad\text{and}\quad
\vv(x)
=
(v_1(x),v_2(x))
\end{equation}
with $\abs{u_1(x)} \neq 1$, $\abs{v_1(x)} \neq 1$ and
\begin{equation}
u_1^2(x) + u_2^2(x) = 1 = v_1^2(x) + v_2^2(x)
\end{equation}
for all $x \in \R$. We also assume that $u_1(x) < 0$ and $v_1(x) > 0$ for all $x \in \R$.

Let $f$ be a smooth function defined on $\mathbb{R}^2_+$ that decays sufficiently fast at infinity. Suppose that there is a smooth function $\tilde f \colon \R \to \R$ so that $f(x, y) = f(x, 0) = \Tilde{f}(x)$. In this case, the head-wave transform of such an $f$ with vertices on the $x$-axis takes the following simplified form:
\begin{align}\label{def:2D refraction transform}
    \mathcal{R}f(x, d)
    &=
    \int_0^\infty
    \tilde{f}\left(x + tu_1(x)\right)
    \D t
    +
    \int_0^d
    \tilde{f}\left(x + t\right)
    \D t
    +
    \int_0^\infty
    \tilde{f}\left(x+ d + t v_1(x)\right)
    \D t
    \nonumber
    \\
    &=
    -\frac{1}{u_1(x)}
    \int_{-\infty}^x
    \tilde f(t)
    \,\D t
    +
    \int_x^{x+d}
    \tilde f(t)
    \,\D t
    +
    \frac{1}{v_1(x+d)}
    \int_{x+d}^\infty
    \tilde f(t)
    \,\D t.
\end{align}

Our first theorem below provides an inversion formula for this transform.

\begin{theorem}[Inversion formula for $n = 2$]
\label{thm:inversion-formula-2d}
Assume that the vector fields $\vuu$ and $\vv$ are smooth and suppose they satisfy the condition
\begin{equation}
\label{eqn:inversion-formula-2d-assumption}
u'_1(x)v_1(x)(1-v_1(x)) + v'_1(x)u_1(x)(1-u_1(x)) \neq 0
\end{equation}
for all $x \in \R$.
Let $f \in C^\infty(\R^2_+)$ and suppose there exists a function $\tilde f \in C^\infty(\R) \cap L^1(\R)$ so that $f(x,y) = \tilde f(x)$ for all $(x,y) \in \R^2_+$.
Then $f$ can be uniquely and explicitly recovered from the knowledge of its head-wave transform $\mathcal{R}f$ and the integral $\int_{-\infty}^\infty \tilde f(t)\,\D t$. In fact, we have
\begin{equation}
\label{eqn:inversion-formula-2d}
f(x,y)
=
\tilde f(x)
=
\frac{
\alpha'(x)\left.\partial_d\mathcal{R}f(x,d)\right|_{d = 0}+\beta'(x)[\partial_x\mathcal{R}f(x,0)-\zeta(x)]
}{
\alpha'(x)\beta(x) - \beta'(x)\alpha(x)
}
\end{equation}
where
\begin{equation}
\alpha(x) = \frac{1}{u_1(x)} + \frac{1}{v_1(x)},
\quad
\beta(x) = 1-\frac{1}{v_1(x)}
\quad\text{and}\quad
\zeta(x)
=
\frac{u'_1(x)}{u_1(x)^2}
\int_{-\infty}^\infty
\tilde f(t)
\,\D t.
\end{equation}
\end{theorem}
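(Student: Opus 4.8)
The plan is to convert the two pieces of first-order information carried by $\mathcal{R}f$ into a pointwise $2\times 2$ linear system for the unknown density and its tail integral, and then solve that system by Cramer's rule. First I would introduce the one-sided antiderivative $G(x) = \int_x^\infty \tilde f(t)\,dt$, so that $G'(x) = -\tilde f(x)$, $\int_{-\infty}^x \tilde f = I - G(x)$ with $I := \int_{-\infty}^\infty \tilde f(t)\,dt$ known by hypothesis. The assumptions $\tilde f \in C^\infty(\R)\cap L^1(\R)$ ensure $G$ is well defined and smooth and that the integrals converge, while the sign conditions in (A1) are exactly what legitimizes the changes of variables producing \eqref{def:2D refraction transform}. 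Rewriting that reduced formula in terms of $G$ gives
\begin{equation*}
\mathcal{R}f(x,d) = -\frac{1}{u_1(x)}\bigl(I - G(x)\bigr) + \bigl(G(x) - G(x+d)\bigr) + \frac{1}{v_1(x+d)}G(x+d),
\end{equation*}
where $I$ is known and $\tilde f,G$ are the unknowns.

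Next I would produce two independent linear relations by differentiating and setting $d=0$. Differentiating in $d$ and using $G' = -\tilde f$, at $d=0$ one obtains
\begin{equation*}
\partial_d \mathcal{R}f(x,d)\big|_{d=0} = \Bigl(1 - \tfrac{1}{v_1(x)}\Bigr)\tilde f(x) - \frac{v_1'(x)}{v_1(x)^2}G(x) = \beta(x)\tilde f(x) - \beta'(x)G(x).
\end{equation*}
Differentiating instead in $x$, the two boundary terms $\tilde f(x)$ coming from the middle integral cancel at $d=0$, leaving
\begin{equation*}
\partial_x\mathcal{R}f(x,0) = -\alpha(x)\tilde f(x) + \frac{u_1'(x)}{u_1(x)^2}\bigl(I - G(x)\bigr) - \frac{v_1'(x)}{v_1(x)^2}G(x).
\end{equation*}
Here the only term involving the known constant $I$ is $\tfrac{u_1'}{u_1^2}I = \zeta(x)$; subtracting it isolates the unknowns, and since $\alpha' = -\tfrac{u_1'}{u_1^2}-\tfrac{v_1'}{v_1^2}$ this gives $\partial_x\mathcal{R}f(x,0) - \zeta(x) = -\alpha(x)\tilde f(x) + \alpha'(x)G(x)$. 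This is precisely where the extra datum $\int_{-\infty}^\infty \tilde f$ enters: it cancels the otherwise unknown far-tail contribution.

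Finally I would solve, at each fixed $x$, the linear system
\begin{equation*}
\begin{pmatrix} \beta(x) & -\beta'(x) \\ -\alpha(x) & \alpha'(x) \end{pmatrix}\begin{pmatrix}\tilde f(x) \\ G(x)\end{pmatrix} = \begin{pmatrix} \partial_d\mathcal{R}f(x,d)|_{d=0} \\ \partial_x\mathcal{R}f(x,0) - \zeta(x)\end{pmatrix}.
\end{equation*}
Its determinant is $\alpha'(x)\beta(x) - \beta'(x)\alpha(x)$, which a direct computation identifies, up to the nonvanishing factor $(u_1 v_1)^{-2}$, with the nondegeneracy expression posited in \eqref{eqn:inversion-formula-2d-assumption}; thus the hypothesis guarantees invertibility at every $x$. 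Cramer's rule for the first unknown then yields exactly \eqref{eqn:inversion-formula-2d}, and since $f(x,y)=\tilde f(x)$ this recovers $f$ on all of $\R^2_+$. I expect no deep obstacle: the inversion is purely pointwise and algebraic, since $G$ enters only algebraically and no integral equation need be solved. The one place demanding care is the bookkeeping in the two differentiations—in particular the chain rule through $v_1(x+d)$ and the cancellation of the middle-integral boundary terms at $d=0$—together with verifying that the determinant reduces exactly to \eqref{eqn:inversion-formula-2d-assumption} rather than to a sign-variant of it.
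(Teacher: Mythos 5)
Your proposal is correct and follows essentially the same route as the paper: introduce $g(x)=\int_x^\infty\tilde f$, differentiate the reduced transform in $d$ and in $x$ at $d=0$ to get the two linear relations $\beta\tilde f-\beta' g$ and $\zeta-\alpha\tilde f+\alpha' g$, and eliminate $g$ using the nonvanishing of $\alpha'\beta-\beta'\alpha=\bigl(u_1'v_1(1-v_1)+v_1'u_1(1-u_1)\bigr)/(u_1^2v_1^2)$. Phrasing the elimination as Cramer's rule for a $2\times2$ system is only a cosmetic repackaging of the paper's explicit linear combination.
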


\begin{proof}
Let us define the function $g \colon \R \to \R$ by
\begin{equation}
g(x)
\coloneqq
\int_x^\infty \tilde f(t)\,\D t.
\end{equation}
Then by directly differentiating the data with respect to $x$ and $d$ we find that
\begin{equation}
\label{eqn:der-of-2d-data-1}
\begin{split}
\partial_x\mathcal{R}f(x,d)
&=
\left(
1
-
\frac{1}{v_1(x+d)}
\right)
\tilde f(x+d)
+
\left(
-1
-
\frac{1}{u_1(x)}
\right)
\tilde f(x)
\\
&\quad-
\frac{u_1'(x)}{u_1(x)^2}
g(x)
-
\frac{v_1'(x+d)}{v_1(x+d)^2}
g(x+d)
+
\frac{u'_1(x)}{u_1(x)^2}
\int_{-\infty}^\infty
\tilde f(t)
\,\D t
\end{split}
\end{equation}
and
\begin{equation}
\label{eqn:der-of-2d-data-2}
\partial_d\mathcal{R}f(x,d)
=
\left(
1
-
\frac{1}{v_1(x+d)}
\right)
\tilde f(x+d)
-
\frac{v_1'(x+d)}{v_1(x+d)^2}
g(x+d).
\end{equation}
Thus evaluating at $d = 0$ we get
\begin{equation}
\begin{split}
\partial_x\mathcal{R}f(x,0)
&=
-
\left(
\frac{1}{u_1(x)}
+
\frac{1}{v_1(x)}
\right)
\tilde f(x)
-
\left(
\frac{u_1'(x)}{u_1(x)^2}
+
\frac{v_1'(x)}{v_1(x)^2}
\right)
g(x)
\\
&=
\zeta(x)
-
\alpha(x)\tilde f(x)
+
\alpha'(x)g(x)
\end{split}
\end{equation}
and
\begin{equation}
\begin{split}
\left.
\partial_d\mathcal{R}f(x,d)
\right|_{d = 0}
&=
\left(
1-
\frac{1}{v_1(x)}
\right)
\tilde f(x)
-
\frac{v_1'(x)}{v_1(x)^2}
g(x)
\\
&=
\beta(x)\tilde f(x)
-
\beta'(x)g(x).
\end{split}
\end{equation}
Hence we find the relation
\begin{equation}
\alpha'(x)\left.\partial_d\mathcal{R}f(x,d)\right|_{d=0}
+
\beta'(x)\partial_x[\mathcal{R}f(x,0)-\zeta(x)]
=
(\alpha'(x)\beta(x) - \beta'(x)\alpha(x))\tilde f(x).
\end{equation}
A direct computation reveals that
\begin{equation}
\alpha'(x)\beta(x) - \beta'(x)\alpha(x)
=
\frac{u'_1(x)v_1(x)(1-v_1(x)) + v'_1(x)u_1(x)(1-u_1(x))}{u_1(x)^2v_1(x)^2}.
\end{equation}
Thus we obtain formula~\eqref{eqn:inversion-formula-2d} due to the assumption in equation~\eqref{eqn:inversion-formula-2d-assumption} as claimed.
\end{proof}

\begin{remark}
\label{rmk:2d-constant-u-v}
Assume that the vector fields $\vuu(x)$ and $\vv(x)$ are constants i.e. there are some $\vuu_0,\vv_0 \in \R^2$ so that $\vuu(x) \equiv \vuu_0$ and $\vv(x) \equiv \vv_0$. Then the inversion formula in Theorem~\ref{thm:inversion-formula-2d} cannot be used since assumption~\eqref{eqn:inversion-formula-2d-assumption} does not hold. In this case, we have the following:
\begin{enumerate}
    \item[(i)] Simpler inversion formulas can be derived using~\eqref{eqn:der-of-2d-data-1} and~\eqref{eqn:der-of-2d-data-2}. In fact, one directly computes that
    \begin{equation}
    \label{eqn:directly-for-d-equal-zero-1}
    f(x,y)
    =
    \tilde f(x)
    =
    -\frac{u_1v_1}{u_1+v_1}
    \partial_x\mathcal{R}f(x,0)
    \end{equation}
    and
    \begin{equation}
    f(x,y)
    =
    \tilde f(x)
    =
    \frac{v_1}{1-v_1}
    \left.\partial_d\mathcal{R}f(x,d)\right|_{d=0}
    \end{equation}
    and 
    \begin{equation}
    \label{eqn:directly-for-d-equal-zero-2}
    f(x,y) 
    =
    \tilde f(x)
    =
    \frac{u_1}{1+u_1}
    \left\{
    \partial_d \mathcal{R}f(x,d)
    -
   \partial_x \mathcal{R}f(x,d)
    \right\}.
    \end{equation}

    \item[(ii)] The function $f$ is uniquely determined by partial data and without the extra knowledge of $\int_{-\infty}^\infty \tilde f(t)\,\D t$.
    In fact, knowledge of the $1$-dimensional data $\mathcal{R}f(x,d_0)$ for all $x \in \R$ but a fixed $d_0 \in \R_+$ suffices to determine $f$ uniquely.
    Assume that $f \in C^\infty_c(\R^2_+)$ and $\tilde f \in C^\infty_c(\R)$ and suppose that $\mathcal{R}f(x,d_0) = 0$ for all $x \in \R$ but a fixed $d_0 \in \R_+$. Then, for $d_0 = 0$, we get $f = 0$ directly from either~\eqref{eqn:directly-for-d-equal-zero-1} or~\eqref{eqn:directly-for-d-equal-zero-2}.
    If $d_0 > 0$, then equation~\eqref{eqn:der-of-2d-data-1} gives the relation
    \begin{equation}
    \left(1- \frac{1}{v_1}\right)
    \tilde f(x+d_0)
    -
    \left(1+ \frac{1}{u_1}\right)
    \tilde f(x)
    =
    \partial_x \mathcal{R}f(x,d_0)
    =
    0.
    \end{equation}
    Hence, we find that
    \begin{equation}
    \tilde f(x+d_0) = C\tilde f(x)
    \quad\text{where}\quad
    C
    =
    \frac{v_1}{u_1}\frac{u_1+1}{v_1-1}.
    \end{equation}
    Iterating this formula yields
    \begin{equation}
    \tilde f(x+md_0)
    =
    C^m\tilde f(x)
    \end{equation}
    for all $x \in \R$ and $m \in \mathbb{N}$. Since the support of $\tilde f$ is compact, for any given $x \in \R$ there is $m \in \N$ so that $x+md_0$ lies outside the support. Hence, we conclude that $f(x,y) = \tilde f(x) = 0$ for all $(x,y) \in \R^2_+$. 
\end{enumerate}
\end{remark}

\subsection{Null space descriptions in the plane}

It is natural to ask if the assumption $f(x, y) = f(x, 0)$ can be removed and the full $f$ can be recovered from $\mathcal{R}f$. Since the head-wave transform data $\mathcal{R}f$ only contains integrals of $f$ in two prescribed directions, one generally expects the transform not to be injective. An obstruction to injectivity can easily be described in the special case when the vector fields $\vuu$ and $\vv$ are constant i.e. there are $\vuu_0,\vv_0 \in \R^2$ so that $\vuu(x) \equiv \vuu_0$ and $\vv(x) \equiv \vv_0$.

Consider a smooth compactly supported function $f \in C^\infty_c(\R^2)$ and let $\varphi \in C^\infty_c(\R^2)$ be a non-zero function so that $\varphi(\vx',0) = 0$ for all $\vx' \in \R$. 
Then, if we define $g \coloneqq f + \nabla_{\vuu_0}\nabla_{\vv_0} \varphi$, where $\nabla_{\vuu_0} = \vuu_0\cdot \nabla$ and $\nabla_{\vv_0} = \vv_0\cdot \nabla$, we have found two distinct functions $f$ and $g$ with equal head-wave transforms, since it is easily verified that
\begin{equation}
\mathcal{R}f
=
\mathcal{R}g
+
\mathcal{R}(\nabla_{\vuu_0}\nabla_{\vv_0} \varphi)
=
\mathcal{R}g.
\end{equation}
Our next theorem (Theorem~\ref{thm:kernel-characterization-2d-constants}) shows that this is the only obstruction to uniqueness. Such invariance of data is sometimes called gauge invariance.

\begin{theorem}[Null space description for constant angles]
\label{thm:kernel-characterization-2d-constants}
    Assume that the vector fields $\vuu$ and $\vv$ are constant i.e. there are some unit vectors $\vuu_0, \vv_0 \in \R^2$ so that $\vuu(x) \equiv \vuu_0$ and $\vv(x) \equiv \vv_0$. In addition, assume that
    \begin{equation}
    u_1 < 0,
    \quad
    u_2 > 0,
    \quad
    v_1 > 0
    \quad\text{and}\quad
    v_2 > 0.
    \end{equation}
    Let $f \in C^\infty_c(\R^2_+)$ be a smooth and compactly supported function. Then, the following two statements are equivalent for the head-wave transform:
\begin{enumerate}
    \item[(i)] $\mathcal{R}f(x,d) = 0$ for all $(x,d) \in \R^2_+$.
    \item[(ii)] The function $f$ vanishes on $\R \times  \{0\}$ and there is a function $\varphi \in C^\infty(\R^2_+)$ vanishing on $\R \times \{0\}$ so that $f = \nabla_{\vuu_0}\nabla_{\vv_0}\varphi$.
\end{enumerate}
\end{theorem}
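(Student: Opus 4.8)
The plan is to prove the two implications separately. The reverse implication (2)$\Rightarrow$(1) is a direct computation: substituting $f=\nabla_{\vuu_0}\nabla_{\vv_0}\varphi$ into the three pieces of $\mathcal Rf(x,d)$ and using that $\vuu_0,\vv_0$ are constant, the integrand on the first ray is the total $\vuu_0$-derivative of $\nabla_{\vv_0}\varphi$ and on the third ray the total $\vv_0$-derivative of $\nabla_{\vuu_0}\varphi$, so each infinite integral telescopes by the fundamental theorem of calculus into a value on the surface $\R\times\{0\}$ and a value at the far end of the ray. The middle integral vanishes outright because $f\equiv0$ on the surface. What remains is to check that the surface and far-end contributions cancel; the tangential parts of $\nabla_{\vuu_0}\varphi$ and $\nabla_{\vv_0}\varphi$ on $\R\times\{0\}$ vanish since $\varphi\equiv0$ there, and the far-end values are controlled by the compact support of $f$. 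I would carry out this bookkeeping explicitly to recover the gauge invariance already announced before the statement.

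For the forward implication the decisive first step is to decouple the two data variables. Writing $U(x)=\int_0^\infty f((x,0)+t\vuu_0)\,dt$ and $V(x)=\int_0^\infty f((x,0)+t\vv_0)\,dt$ for the one-sided beam transforms into the half-space, and $W(x)=\int_{-\infty}^x f(s,0)\,ds$ for an antiderivative of the surface restriction, the middle integral is $W(x+d)-W(x)$ and the whole transform becomes
\[
\mathcal Rf(x,d)=\bigl(U(x)-W(x)\bigr)+\bigl(V(x+d)+W(x+d)\bigr),
\]
a function of the separated form $A(x)+B(x+d)$ with $A=U-W$ and $B=V+W$. Such a function vanishes for all $x\in\R$ and $d\ge0$ if and only if $A$ and $B$ are constants summing to zero, and since $f\in C^\infty_c(\R^2_+)$ forces $U,V$ to have compact support and $W$ to be eventually constant, both constants must be $0$. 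Hence (1) is equivalent to the pair of scalar identities $U\equiv W$ and $V\equiv -W$ on all of $\R$.

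It remains to turn $U\equiv W$ and $V\equiv -W$ into the structural conclusion (2), and this is where I expect the main obstacle to lie. The crux is to show that the surface restriction $f(\cdot,0)$ vanishes, i.e.\ $W\equiv0$: differentiating the two beam transforms along their rays and integrating by parts expresses $f(x,0)$ in terms of $U'$, $V'$, and the beam transforms of $\partial_y f$, and I expect the sign hypotheses $u_1<0<v_1$, $u_2,v_2>0$ to make these relations compatible only when $f(\cdot,0)\equiv0$. Once $W\equiv0$ the identities collapse to $U\equiv0$ and $V\equiv0$, i.e.\ the full-line integrals of $f$ in the directions $\vuu_0$ and $\vv_0$ both vanish. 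I would then use the standard single-direction principle that vanishing line integrals characterize directional derivatives to produce potentials $h_1,h_2$ with $f=\nabla_{\vuu_0}h_1=\nabla_{\vv_0}h_2$, and integrate once more in the oblique coordinates dual to $\vuu_0,\vv_0$ to obtain $\varphi$ with $f=\nabla_{\vuu_0}\nabla_{\vv_0}\varphi$. The vanishing of both one-directional integrals is precisely what makes this iterated antiderivative decay at both ends so that $\varphi$ is globally defined and can be arranged to vanish on $\R\times\{0\}$; the transversal foliation of $\R^2_+$ by the $\vuu_0$- and $\vv_0$-rays, guaranteed by the sign hypotheses, is what gives the antiderivatives the one-sided support needed to keep the far-end boundary terms from obstructing this choice.
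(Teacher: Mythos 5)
Your reverse implication and your separation of the data into the form $A(x)+B(x+d)$ are both sound, and the resulting reduction of (1) to the two scalar identities $U\equiv W$ and $V\equiv -W$ is a clean repackaging of what the paper obtains by differentiating the data in $d$. Likewise, your final potential construction (a $\vuu_0$-antiderivative followed by a $\vv_0$-antiderivative, with the vanishing of the two one-directional beam transforms guaranteeing that both antiderivatives vanish on $\R\times\{0\}$) is exactly the construction in the paper's proof.

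The genuine gap is the step you yourself flag as the crux: proving that $f(\cdot,0)\equiv 0$, equivalently $W\equiv 0$. The mechanism you sketch does not close. Integrating $\tfrac{d}{dt}f((x,0)+t\vuu_0)=u_1\partial_xf+u_2\partial_yf$ over $t\in(0,\infty)$ gives
\begin{equation*}
f(x,0)=-u_1U'(x)-u_2\int_0^\infty \partial_yf\bigl((x,0)+t\vuu_0\bigr)\,dt,
\end{equation*}
and similarly with $\vv_0$ and $V$; these relations introduce the beam transforms of $\partial_yf$ as new unknowns that are not controlled by $U$, $V$, or $W$, so together with $U\equiv W$ and $V\equiv -W$ they form an underdetermined system. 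The sign hypotheses alone do not extract $f(\cdot,0)\equiv 0$ from them; the only thing you get for free is $\int_\R f(s,0)\,ds=0$, by letting $x\to+\infty$ in $U\equiv W$. The paper closes this step by a different device: it differentiates the data in $d$ to obtain $f(z,0)+\int_0^\infty\partial_xf((z,0)+t\vv_0)\,dt=0$ and then applies the directional derivative $\nabla_{\vv_0}$, exploiting the identity $\int_0^\infty\nabla_{\vv_0}g((z,0)+t\vv_0)\,dt=-g(z,0)$ so that the unknown ray integral is eliminated rather than merely renamed, leaving a pointwise relation for $\partial_xf(\cdot,0)$. Some argument of this kind is indispensable here, and your proposal does not supply one; everything downstream in your forward implication (the vanishing of $U$ and $V$, hence the existence of $\varphi$) is conditional on this missing step.
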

\begin{proof}
    It is easy to see that statement (ii) implies (i) simply by the fundamental theorem of calculus. Therefore, we focus on proving the other direction, that is, (i) $\Longrightarrow$ (ii). 
    
    In the following, we use the notation $\tilde{x}$ to represent vector $(x, 0)$ in $\mathbb{R}^2$ and $\vuu = (u_1, u_2)$ and $\vv = (v_1, v_2)$. Assume for all $(x,d) \in \R \times \R_+$, we have $\mathcal{R}f(x,d) = 0$, which is the same as saying, 
    \begin{align}\label{eq:Rf=0}
        \int_0^\infty f\left(\tilde{x} + t\vuu\right)\D t + \int_0^d f \left(x + t, 0\right)\D t + \int_0^\infty f\left(\widetilde{x + d} + t \vv\right)\D t =  0, \ \ \forall (x,d) \in \R \times \R_+.
    \end{align}
    Differentiating the above equation with respect to $d$, we obtain
     \begin{align*}
     f \left(x + d, 0\right) + \int_0^\infty \frac{\partial f}{\partial x}\left(\widetilde{x + d} + t \vv\right)\D t =  0, \ \ \forall (x,d) \in \R \times \R_+.
    \end{align*}
    Next, let us apply the directional derivative $\nabla_\vv$ to the above equation and simplify to get 
      \begin{align*}
    \left(v_1 - 1\right) \frac{\partial f}{\partial x}\left(x + d, 0\right)&=  0, \ \ \forall (x,d) \in \R \times \R_+.\\
    \Longrightarrow \qquad \qquad \qquad \qquad  \frac{\partial f}{\partial x}\left(x + d, 0\right)&=  0, 
    \ \ \forall (x,d) \in \R \times \R_+ \ (\mbox{since } v_1 \neq 1 )\qquad \qquad\\
     \Longrightarrow \qquad \ \ \qquad \qquad \qquad \qquad  f\left(x, 0\right)&=  0,\ \ \forall x \in \R.
    \end{align*}
    This proves that $\displaystyle \mathcal{R}f(x,d) = 0 \implies f\left(x, 0\right)=  0$. With this, the equation \eqref{eq:Rf=0} reduces to the following identity: 
    \begin{align*}
            \int_0^\infty f\left(\tilde{x} + t\vuu\right)\D t  + \int_0^\infty f\left(\widetilde{x + d} + t \vv\right)\D t =  0, \ \ \forall (x,d) \in \R \times \R_+.
    \end{align*}
Following a similar line of argument as discussed above, one actually obtains that the integrals appearing in the above relation are simultaneously zero, that is, we have 
    \begin{align}\label{eq:X_u f = 0}
            \int_0^\infty f\left(\widetilde{x} + t\vuu\right)\D t  = 0 \quad \mbox{ and } \quad  \int_0^\infty f\left(\widetilde{x} + t \vv\right)\D t =  0, \ \ \forall x \in \R .
    \end{align}
Next, we claim that the above identities are true if and only if there exists a $\varphi \in C^\infty(\R \times \R_+)$ with $\varphi$ vanishes on $\R \times  \{0\}$ such that  $f = \nabla_{\vuu}\nabla_{\vv}\varphi$. Again, it is easy to see that  $$ f(x, y) = \nabla_{\vuu}\nabla_{\vv}\varphi(x, y) \implies     \int_0^\infty f\left(\widetilde{x} + t\vuu\right)\D t  = 0 \quad \mbox{ and } \quad  \int_0^\infty f\left(\widetilde{x} + t \vv\right)\D t =  0, \ \ \forall x \in \R .$$
To prove the other direction, let us start with 
\begin{align*}
    \int_0^\infty f\left(\widetilde{x} + t\vuu\right)\D t  = 0, \ \ \forall x \in \R . 
\end{align*}
Define $$\psi(x,y) =  \int_0^{y/u_2} f\left((x-(u_1/u_2)y, 0) + t\vuu\right)\D t.$$ 
With this choice of $\psi$, one may verify by a direct calculation that $\displaystyle \psi(x,0) = 0$ and $\displaystyle f(x, y) =  \nabla_\vuu\psi(x, y)$.  Now, let us consider the second integral from equation \eqref{eq:X_u f = 0}
\begin{align*}
    \int_0^\infty f\left(\widetilde{x} + t \vv\right)\D t &=  0, \ \ \forall x \in \R \\
    \Longrightarrow    \quad\qquad\qquad\int_0^\infty \nabla_\vuu\psi\left(\widetilde{x} + t \vv\right)\D t &=  0, \ \ \forall x \in \R \qquad\qquad\qquad\\
    \Longrightarrow \  \qquad \qquad\qquad  \int_0^\infty \psi\left(\widetilde{x} + t \vv\right)\D t &=  0, \ \ \forall x \in \R. \qquad\qquad
\end{align*}
Finally, define  $$\varphi(x,y) =  \int_0^{y/v_2}\psi\left((x-(v_1/v_2)y, 0) + t\vv\right)\D t.$$ 
Then  $\displaystyle \varphi(x,0) = 0$ and $\displaystyle \psi(x, y) =  \nabla_\vv\varphi(x, y)$. Hence $\displaystyle f(x, y) =  \nabla_\vuu\psi(x, y) =  \nabla_\vuu \nabla_\vv\varphi(x, y)$ as claimed. This completes the proof of the theorem. 
\end{proof}

Before moving to higher dimensional head-wave transforms we prove a null space description result (Theorem~\ref{thm:gauge-2d-var-angles}) when the vector fields $\vuu$ and $\vv$ are allowed to be variable. To state the theorem, we define the variable coefficient partial differential operators
\begin{equation}
P_\vuu(x,y)
=
\vuu(x,y) \cdot \nabla
\quad\text{and}\quad
P_\vv(x,y)
=
\vv(x,y) \cdot \nabla.
\end{equation}
Let us denote
\begin{equation}
A_{\vuu,\vv}(x,y)
\coloneqq
\begin{pmatrix}
\vuu(x,y) & \vv(x,y)
\end{pmatrix}
=
\begin{pmatrix}
u_1(x,y) & v_1(x,y) \\
u_2(x,y) & v_2(x,y)
\end{pmatrix}.
\end{equation}
Theorem~\ref{thm:gauge-2d-var-angles} requires an additional technical assumption that the function $f \in C^\infty_c(\R^2)$ satisfies
\begin{equation}
\label{eqn:extra-assumption}
\diver(\vuu(x,y))
\int_0^\infty
f((x,y) + t\vuu(x,y))
\,\D t
-
\diver(\vv(x,y))
\int_0^\infty
f((x,y) + t\vv(x,y))
\,\D t
=
0
\end{equation}
for all $(x,y) \in \R^2_+$.

\begin{theorem}[General gauge description for $n = 2$]
\label{thm:gauge-2d-var-angles}
Assume that the vector fields $\vuu$ and $\vv$ are smooth and always linearly independent. In addition, assume that $\vuu$ and $\vv$ can be extended to $\R^2$ so that their integral curve are straight lines.
Let $f \in C^\infty_c(\R^2_+)$ be a function satisfying equation~\eqref{eqn:extra-assumption} and suppose that $\supp(f) \subseteq \{y > 0\}$. Then the following statements are equivalent:
\begin{enumerate}
    \item[(i)] $\mathcal{R}f(x,d) = 0$ for all $(x,d) \in \R^2_+$.

    \item[(ii)] There is a function $\varphi \in C^\infty_c(\R^2_+)$ vanishing on $\R \times \{0\}$ so that
    \begin{equation}
    f(x,y) 
    =
    P_\vuu(x,y)
    [\det(A_{\vuu,\vv}(x,y))^{-1}
    P_\vv(x,y)\varphi(x,y)]
    \end{equation}
    and also that
    \begin{equation}
    f(x,y) 
    =
    P_\vv(x,y)
    [\det(A_{\vuu,\vv}(x,y))^{-1}
    P_\vuu(x,y)\varphi(x,y)].
    \end{equation}
\end{enumerate}
\end{theorem}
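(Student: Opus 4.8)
The plan is to follow the template of Theorem~\ref{thm:kernel-characterization-2d-constants}, upgrading each step to variable coefficients and pinning down exactly where the factor $\det(A_{\vuu,\vv})^{-1}$ and the technical hypothesis~\eqref{eqn:extra-assumption} are forced to appear. Throughout I take the straight-line extensions of $\vuu$ and $\vv$ to be unit, so that the integral curve through a point $(x,y)$ is the line $t\mapsto(x,y)+t\vuu(x,y)$ (resp.\ $t\mapsto(x,y)+t\vv(x,y)$), along which the field is constant. The basic tool is the half-line (beam) transform $\mathcal{I}_\vuu g(x,y)\coloneqq\int_0^\infty g((x,y)+t\vuu(x,y))\,dt$ and its counterpart $\mathcal{I}_\vv$. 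Because the integral curves are straight lines, the fundamental theorem of calculus along them yields the identities $P_\vuu\mathcal{I}_\vuu g=-g$ and $P_\vv\mathcal{I}_\vv g=-g$ for compactly supported $g$; these are the variable-coefficient replacements of the constant-coefficient telescoping.

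For the implication (1)$\Longrightarrow$(2): since $\supp(f)\subseteq\{y>0\}$ the middle integral in $\mathcal{R}f$ vanishes, so (1) reads $\mathcal{I}_\vuu f(x,0)+\mathcal{I}_\vv f(x+d,0)=0$ for all $x$ and $d\ge0$. Fixing $x$ and varying $d$, together with compact support, forces each boundary trace to vanish identically, i.e.\ $\mathcal{I}_\vuu f(\cdot,0)\equiv0$ and $\mathcal{I}_\vv f(\cdot,0)\equiv0$. Set $\psi\coloneqq\mathcal{I}_\vuu f$ and $\chi\coloneqq\mathcal{I}_\vv f$. The vanishing of these traces forces $\psi$ and $\chi$ to vanish on the whole slab below $\supp(f)$ (on any ray the portion below the support contributes nothing while the full ray integrates to zero), so $\psi,\chi\in C^\infty_c(\R^2_+)$ and both vanish to all orders on $\R\times\{0\}$. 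By the beam identities, $P_\vuu\psi=-f$ and $P_\vv\chi=-f$.

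The heart of the proof is to produce a single $\varphi$ with $P_\vv\varphi=-\det(A_{\vuu,\vv})\,\psi$ and $P_\vuu\varphi=-\det(A_{\vuu,\vv})\,\chi$ simultaneously; both displayed formulas in (2) then follow at once, since $\det(A_{\vuu,\vv})^{-1}P_\vv\varphi=-\psi$ gives $P_\vuu[\det(A_{\vuu,\vv})^{-1}P_\vv\varphi]=-P_\vuu\psi=f$, and symmetrically for the other. Prescribing the two frame-derivatives of $\varphi$ is the same as prescribing $\nabla\varphi$; solving the $2\times2$ system (here $\det(A_{\vuu,\vv})\neq0$ by linear independence) gives $\partial_x\varphi=u_2\psi-v_2\chi$ and $\partial_y\varphi=v_1\chi-u_1\psi$. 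Thus $\varphi$ exists precisely when the $1$-form $(u_2\psi-v_2\chi)\,dx+(v_1\chi-u_1\psi)\,dy$ is closed. Computing $\partial_y(u_2\psi-v_2\chi)-\partial_x(v_1\chi-u_1\psi)$ and substituting $P_\vuu\psi=-f=P_\vv\chi$, the second-derivative terms cancel and what survives is exactly $\diver(\vuu)\psi-\diver(\vv)\chi=\diver(\vuu)\mathcal{I}_\vuu f-\diver(\vv)\mathcal{I}_\vv f$, which vanishes by~\eqref{eqn:extra-assumption}. Hence the form is exact; since $\nabla\varphi$ is compactly supported I normalize the potential to lie in $C^\infty_c(\R^2_+)$, and as $\nabla\varphi$ is flat near $\R\times\{0\}$ while $\varphi$ is constant near infinity, $\varphi$ vanishes on $\R\times\{0\}$ (indeed to first order).

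For the converse (2)$\Longrightarrow$(1): using $\supp(f)\subseteq\{y>0\}$ to drop the middle integral and integrating each formula along the appropriate straight-line ray, the fundamental theorem of calculus leaves only boundary contributions on $\R\times\{0\}$; these cancel once one knows $\nabla\varphi=0$ there, which is precisely the first-order vanishing delivered by the construction above (equivalently $\supp(\varphi)\subseteq\{y>0\}$). I expect the main obstacle to be exactly this bookkeeping together with the closedness computation: one must verify carefully that $\psi$ and $\chi$ are smooth, compactly supported, and flat on the boundary, and that the mixed-derivative cancellation leaves precisely the combination in~\eqref{eqn:extra-assumption}. The linear independence of $\vuu$ and $\vv$ (making $\{\vuu,\vv\}$ a genuine frame) and the straight-line hypothesis (which powers the beam identities $P_\vuu\mathcal{I}_\vuu=P_\vv\mathcal{I}_\vv=-\mathrm{Id}$) are the two structural inputs used throughout.
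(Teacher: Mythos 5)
Your proposal is correct and follows essentially the same route as the paper: the same auxiliary beam potentials $\psi$ and $\chi$ (the paper's $-\psi_{\vuu}$ and $-\psi_{\vv}$), the identical $1$-form whose closedness is exactly hypothesis~\eqref{eqn:extra-assumption}, the compactly supported Poincar\'e lemma, and the same determinant identity yielding both factorizations. If anything you are slightly more careful than the paper in observing that $\psi$, $\chi$, and hence $\varphi$ vanish on a whole slab near $y=0$, which is what actually makes the boundary terms drop out in the direction (2)$\Longrightarrow$(1).
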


\begin{remark}
Theorem~\ref{thm:gauge-2d-var-angles} makes assumptions on  vector field $\vuu$ and $\vv$ which look somewhat restricting, in particular equation~\eqref{eqn:extra-assumption}. In this remark we give three different conditions under which all of the assumptions on these vector fields are satisfied.
\begin{enumerate}
    \item[(i)] \textbf{Both $\vuu$ and $\vv$ are constants}: Let $\vuu(x,y) = (-1,1)$ and $\vv(x,y) = (1,1)$ for all $(x,y) \in \R^2_+$. Then $\vuu$ and $\vv$ are linearly independent,  all integral curves of both fields are straight lines and equation~\eqref{eqn:extra-assumption} is satisfied since $\diver(\vuu) = \diver(\vv) = 0$.

    \item[(ii)] \textbf{The field $\vuu$ is constant but $\vv$ is variable}: Let $\vuu(x,y) = (-1,1)$ and $\vv(x,y) = (1+(x-y)^2,1+(x-y)^2)$ for all $(x,y) \in \R^2_+$. Then $\vuu$ and $\vv$ are linearly independent, all integral curves of both fields are straight lines and equation~\eqref{eqn:extra-assumption} is satisfied since $\diver(\vuu) = \diver(\vv) = 0$.

    \item[(iii)] \textbf{Both $\vuu$ and $\vv$ are variable}: Let $\vuu(x,y) = (-1-(x-y)^2,1+(x-y)^2)$ and $\vv(x,y) = (1+(x-y)^2,1+(x-y)^2)$ for all $(x,y) \in \R^2_+$. Then $\vuu$ and $\vv$ are linearly independent and integral curves of both fields are straight lines. It holds that $\diver(\vuu) = -4(x-y)$ and $\diver(\vv) = 0$. In this case equation~\eqref{eqn:extra-assumption} is satisfied if we are given the a priori knowledge that
    \begin{equation}
    \int_0^\infty
    f((x,y) + t\vuu(x,y))
    \,\D t
    =
    0
    \end{equation}
    for all $(x,y) \in \R^2_+$.
\end{enumerate}
\end{remark}

\begin{proof}[Proof of Theorem~\ref{thm:gauge-2d-var-angles}]
Let us first assume that (ii) is true. Let us denote
\begin{equation}
\psi_1(x,y)
\coloneqq
\det(A_{\vuu,\vv}(x,y))
P_\vv(x,y)\varphi(x,y)
\end{equation}
and
\begin{equation}
\psi_2(x,y)
\coloneqq
\det(A_{\vuu,\vv}(x,y))
P_\vuu(x,y)\varphi(x,y).
\end{equation}
Then, since the integral curves of $\vuu(x,y)$ and $\vv(x,y)$ are straight lines, the head-wave transform of $f$ can be written as
\begin{equation}
\mathcal{R}f(x,d)
=
\int_0^\infty
P_\vuu(x,0)\psi_1((x,0) + t\vuu(x,0))
\,\D t
+
\int_0^\infty
P_\vv(x+d,0)\psi_2((x+d,0) + t\vv(x+d,0))
\,\D t
\end{equation}
This expression equals $0$ due to the fundamental theorem of calculus. Hence (i) holds.

Let us then assume that $\mathcal{R}f(x,d) = 0$ for all $(x,d) \in \R^2_+$. First, we construct auxiliary functions $\psi_\vuu,\psi_\vv \in C^\infty_c(\R^2_+)$ so that $\nabla_\vuu \psi_\vuu = f = \nabla_\vv \psi_\vv$ and $\psi_\vuu|_{y = 0} = 0 = \psi_\vv|_{y = 0}$. The vanishing of the head-wave transform is equivalent to
\begin{equation}
\int_0^\infty
f(x+tu_1(x,0),tu_2(x,0))
\,\D t
+
\int_0^d
f(x+t,0)
\,\D t
+
\int_0^\infty
f(x+d+tv_1(x+d,0),tv_2(x+d,0))
\,\D t
=
0.
\end{equation}
Let us then define the functions $\psi_\vuu, \psi_\vv \colon \R^2_+ \to \R$ by the formulas
\begin{equation}
\psi_\vuu(x,y)
\coloneqq
-
\int_0^\infty
f((x,y) + t\vuu(x,y))
\,\D t
\end{equation}
and
\begin{equation}
\psi_\vv(x,y)
\coloneqq
-
\int_0^\infty
f((x,y) + t\vv(x,y))
\,\D t.
\end{equation}
Since $f \in C^\infty_c(\R^2_+)$, we have $\psi_\vuu, \psi_\vv \in C^\infty_c(\R^2_+)$. Additionally, it follows directly from the fundamental theorem of calculus that $P_{\vuu}(x,y)\psi_\vuu(x,y) = f(x,y)$ and $P_{\vv}(x,y)\psi_\vv(x,y) = f(x,y)$. Since the support of $f$ lies in $\{y > 0\}$ the vanishing of the head-wave transform gives
\begin{equation}
0
=
-\mathcal{R}f(x,d)
=
\psi_\vuu(x,0)
+
\psi_\vv(x+d,0)
\end{equation}
for all $x \in \R$ and $d \in \R_+$. Since this holds true for all $d \in \R_+$ we have that $\psi_\vuu|_{y = 0} = 0$ and consequently, $\psi_\vv|_{y = 0} = 0$. Next, we use these auxiliary functions to construct the required function $\varphi$.

Consider the $1$-form $\omega$ defined by
\begin{equation}
\omega
\coloneqq
(v_2\psi_\vv - u_2\psi_\vuu)dx
+
(u_1\psi_\vuu - v_1\psi_\vv)dy.
\end{equation}
We denote its components by
\begin{equation}
\omega_1(x,y)
\coloneqq
v_2(x,y)\psi_\vv(x,y)
-
u_2(x,y)\psi_\vuu(x,y)
\quad\text{and}\quad
\omega_2(x,y)
\coloneqq
u_1(x,y)\psi_\vuu(x,y)
-
v_1(x,y)\psi_\vv(x,y).
\end{equation}
It holds that
\begin{equation}
\partial_1\omega_2
=
(\partial_1u_1)\psi_\vuu
+
u_1(\partial_1\psi_\vuu)
-
(\partial_1v_1)\psi_\vv
-
v_1(\partial_1\psi_\vv)
\end{equation}
and
\begin{equation}
\partial_2\omega_1
=
(\partial_2v_2)\psi_\vv
+
v_2(\partial_2\psi_\vv)
-
(\partial_2u_2)\psi_\vuu
-
u_2(\partial_2\psi_\vuu).
\end{equation}
Hence, the exterior derivative of $\omega$ is
\begin{equation}
\begin{split}
d\omega
&=
(\partial_1\omega_2 - \partial_2\omega_1)
dx \wedge dy
\\
&=
(\diver(\vuu)\psi_\vuu
-
\diver(\vv)\psi_\vv
+
P_\vuu \psi_\vuu
-
P_\vv \psi_\vv)
dx \wedge dy
\\
&=
(\diver(\vuu)\psi_\vuu
-
\diver(\vv)\psi_\vv)dx \wedge dy
\\
&=
0
\end{split}
\end{equation}
where the last line follows from the assumption~\eqref{eqn:extra-assumption} made on $f$. Since both $\psi_\vuu$ and $\psi_\vv$ are compactly supported and smooth, we have shown that $\omega$ is a closed compactly supported smooth $1$-form. Hence is follows from the Poincare lemma~\cite[Corollary 8.3.17]{Conlon2008} that $\omega$ is exact i.e. there is some $\varphi \in C^\infty_c(\R^2_+)$ so that $d\varphi = \omega$. In particular, $\partial_1\varphi = \omega_1$ and $\partial_2\varphi = \omega_2$. Thus
\begin{equation}
\begin{split}
P_{\vuu}(x,y) \varphi(x,y)
&=
u_1(x,y)\omega_1(x,y)
+
u_2(x,y)\omega_2(x,y)
\\
&=
u_1(x,y)v_2(x,y)\psi_\vv(x,y)
-
u_1(x,y)u_2(x,y)\psi_\vuu(x,y)
\\
&\quad
+
u_2(x,y)u_1(x,y)\psi_\vuu(x,y)
-
u_2(x,y)v_1(x,y)\psi_\vv(x,y)
\\
&=
\det(A_{\vuu,\vv}(x,y))\psi_\vv(x,y).
\end{split}
\end{equation}
Consequently, we find that
\begin{equation}
P_{\vv}(x,y)(\det(A_{\vuu,\vv}(x,y))^{-1}P_{\vuu}(x,y)\varphi(x,y))
=
P_{\vv}(x,y)\psi_\vv(x,y)
=
f(x,y).
\end{equation}
A similar computation reveals that also
\begin{equation}
P_{\vuu}(x,y)(\det(A_{\vuu,\vv}(x,y))^{-1}P_{\vv}(x,y) \varphi(x,y))
=
P_{\vuu}(x,y)\psi_\vuu(x,y)
=
f(x,y).
\end{equation}
Thus $\varphi$ is the function we set out to construct.
\end{proof}

\section{The head-wave transform in general dimensions}
\label{sec:general-dimensions}

As mentioned earlier, the inversion of the head-wave transform is formally overdetermined in $\mathbb{R}^n$ when $n \geq 3$. More specifically, the head-wave transform $\mathcal{R}f$ depends on $(2n-2)$ variables, whereas $f$ depends only on $n$ parameters. Therefore, it is natural to consider this inversion question with lower-dimensional data sets. Below, we have considered various physically relevant setups to reduce the dimensionality of the dataset and address the invertibility questions in each setup.

\subsection{Results for a fixed gliding direction}

Recall, $\mathcal{R}f$ depends on $\vx^\prime \in \R^{n-1}$, $d\in  \R_+$, and $\theta_0 \in \sphere^{n-2}$. In this section, we fix a $\theta_0 \in \sphere^{n-2}$ and consider $n$-dimesional data $\mathcal{R}f(\vx^\prime, d, \theta_0)$ for $\vx^\prime \in \R^{n-1}$, $d\in  \R_+$ to reconstruct the scalar function $f$ defined on $\R^{n}$. For a given $\vx = (\vx^\prime, x_n) \in \R^n$, the entire refracted line will lie in the plane $S_{\vx^\prime, \theta_0}$ (the plane that is perpendicular to $\{x_n = 0\}$ and containing the line $\{ \vx^\prime + t\theta_0: t \in \R\}$). This plane  $S_{\vx^\prime, \theta_0}$ can be parameterized by new coordinates $(x_1, y_1)$. Once we make these identifications, geometrically, this case ($\theta_0$ being fixed) is very similar to the plane case discussed in Section \ref{sec:refraction-plane} above. One can apply the results proven in Section \ref{sec:refraction-plane} slice-by-slice (on parallel slices to the plane $S_{\vx^\prime, \theta_0}$) and obtain analogous results in this case as well. 

Again, we assume $f$ is a function independent of the variable $x_n$, i.e., there is a smooth compactly supported function $\tilde f: \R^{n-1} \to \R$ such that $f(\vx) = \tilde f(\vx')$. Recall that $\vuu'(\vx',\theta_0) = \lambda_\vuu(\vx')\theta_0$ and $\vv'(\vx',\theta_0) = \lambda_\vv(\vx')\theta_0$ for some smooth functions $\lambda_\vuu, \lambda_\vv \colon \R^{n-1} \to \R$. In addition, we assume that $\lambda_\vuu(\vx') < 0$ and $\lambda_\vv(\vx') > 0$. Then the head-wave transform reduces to
\begin{equation}
\begin{split}
\mathcal{R}f(\vx',d,\theta_0)
&=
\int_0^\infty
\tilde f(\vx'+t\vuu'(\vx',\theta_0))
\,\D t
+
\int_0^d
\tilde f(\vx'+t\theta_0)
\,\D t
+
\int_0^\infty
\tilde f(\vx'+d\theta_0+t\vv'(\vx'+d\theta_0,\theta_0))
\,\D t
\\
&=
-\frac{1}{\lambda_\vuu(\vx')}
\int_{-\infty}^0
\tilde f(\vx'+t\theta_0)
\,\D t
+
\int_0^d \tilde f(\vx'+t\theta_0)
\,\D t
+
\frac{1}{\lambda_\vv(\vx'+d\theta_0)} \int_d^\infty
\tilde f(\vx'+t\theta_0)
\,\D t.
\end{split}
\end{equation}
In this reduced case, we have the inversion results similar to the $2$-dimensional case discussed in Theorem \ref{thm:inversion-formula-2d} of Section \ref{sec:refraction-plane}. We use the notation $D_{\theta_0} \coloneqq \theta_0 \cdot \nabla_{\vx'}$.

\begin{theorem}[Inversion formula for fixed $\theta$]
\label{thm:inversion-formula-fixed-angle}
Fix an angle $\theta_0 \in \sphere^{n-2}$. Assume that
\begin{equation}
(1 - \lambda_{\vv}(\vx'))
\lambda_{\vv}(\vx')
D_{\theta_0}\lambda_{\vuu}(\vx')
+
(1 - \lambda_{\vuu}(\vx'))
\lambda_{\vuu}(\vx')
D_{\theta_0}\lambda_{\vv}(\vx')
\neq
0
\end{equation}
for all $\vx' \in \R^{n-1}$.
Let $f \in C^\infty_c(\R^n_+)$ and suppose there is a function $\tilde f \in C^\infty_c(\R^{n-1})$ so that $f(\vx) = \tilde f(\vx')$ for all $\vx = (\vx',x_n) \in \R^n_+$. Then the function $f$ can be uniquely and explicitly recovered from the knowledge of its head-wave transform $\mathcal{R}f(\vx',d,\theta_0)$ for all $\vx' \in \R^{n-1}$ and $d \in \R_+$ and the integrals $\int_{-\infty}^\infty \tilde f(\vx' + t\theta_0)\,\D t$ for all $\vx' \in \R^{n-1}$. In fact, we have
\begin{equation}
f(\vx',x_n)
=
\tilde f(\vx')
=
\frac{
D_{\theta_0}\beta(\vx')D_{\theta_0}[\mathcal{R}f(\vx',0,\theta_0)-\zeta(\vx')]
+
D_{\theta_0}\alpha(\vx')\left.\partial_d\mathcal{R}f(\vx',d,\theta_0)\right|_{d=0}
}{
\beta(\vx')D_{\theta_0}\alpha(\vx')
-
\alpha(\vx')D_{\theta_0}\beta(\vx')
}
\end{equation}
where
\begin{equation}
\alpha(\vx')
=
\frac{1}{\lambda_{\vuu}(\vx')}
+
\frac{1}{\lambda_{\vv}(\vx')},
\quad
\beta(\vx')
=
1
-
\frac{1}{\lambda_{\vv}(\vx')}
\quad\text{and}\quad
\zeta(\vx')
=
\frac{D_{\theta_0}\lambda_{\vuu}(\vx')}{\lambda_{\vuu}(\vx')^2}
\int_{-\infty}^\infty
\tilde f(\vx' + t\theta_0)
\,\D t.
\end{equation}
\end{theorem}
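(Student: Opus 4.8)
The plan is to reduce the claim to the planar inversion already established in Theorem~\ref{thm:inversion-formula-2d}, exploiting the fact that fixing $\theta_0$ confines every ray to a plane parallel to $P_{\vx',\theta_0}$, so that the $(n-1)$-dimensional problem decouples into a family of one-parameter problems along the $\theta_0$-lines. Concretely, I would fix a base point $\vx'_0 \in \R^{n-1}$, parametrize the line $\{\vx'_0 + s\theta_0 : s \in \R\}$ by $s$, and set $F(s) \coloneqq \tilde f(\vx'_0+s\theta_0)$, $U(s) \coloneqq \lambda_\vuu(\vx'_0+s\theta_0)$ and $V(s) \coloneqq \lambda_\vv(\vx'_0+s\theta_0)$. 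Substituting into the reduced expression for $\mathcal{R}f(\vx',d,\theta_0)$ displayed above and performing the change of variable $\tau = s+t$ in each of the three integrals, the transform becomes
\begin{equation}
\mathcal{R}f(\vx'_0+s\theta_0,d,\theta_0)
=
-\frac{1}{U(s)}\int_{-\infty}^s F(\tau)\,d\tau
+
\int_s^{s+d} F(\tau)\,d\tau
+
\frac{1}{V(s+d)}\int_{s+d}^\infty F(\tau)\,d\tau,
\end{equation}
which is precisely the planar head wave transform of $F$ from~\eqref{def:2D refraction transform} under the dictionary $u_1 \leftrightarrow U$, $v_1 \leftrightarrow V$, $x \leftrightarrow s$.

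Second, I would verify that the hypotheses of Theorem~\ref{thm:inversion-formula-2d} transfer to each slice. The sign conditions $U<0$ and $V>0$ follow from (A2); moreover, since $\vuu = (\lambda_\vuu\theta_0, u_n)$ is a unit vector with $u_n > 0$, one has $\lambda_\vuu^2 = 1 - u_n^2 < 1$, so $\abs{U}\neq 1$, and likewise $\abs{V}\neq 1$. Recognizing $U' = D_{\theta_0}\lambda_\vuu$ and $V' = D_{\theta_0}\lambda_\vv$, the planar nondegeneracy condition $U'V(1-V)+V'U(1-U)\neq 0$ is exactly the assumption of the present theorem. Theorem~\ref{thm:inversion-formula-2d} then recovers $F$ on the whole line from the slice data together with the single number $\int_{-\infty}^\infty F(\tau)\,d\tau = \int_{-\infty}^\infty \tilde f(\vx'_0 + t\theta_0)\,dt$, which is precisely the extra datum assumed here. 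Letting $\vx'_0$ range over $\R^{n-1}$ sweeps out every point, so $\tilde f$, and hence $f$, is determined; the explicit formula is obtained by transporting the planar formula~\eqref{eqn:inversion-formula-2d} along $\theta_0$, replacing $\partial_x$ by $D_{\theta_0}$.

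To produce the displayed formula directly (and confirm the transport), I would mirror the planar computation using the $\theta_0$-antiderivative $g(\vx') \coloneqq \int_0^\infty \tilde f(\vx'+t\theta_0)\,dt$, for which $D_{\theta_0}g = -\tilde f$, together with $I(\vx') \coloneqq \int_{-\infty}^\infty \tilde f(\vx'+t\theta_0)\,dt$ satisfying $D_{\theta_0}I = 0$. Differentiating $\mathcal{R}f$ in $d$ and evaluating at $d=0$ gives $\partial_d\mathcal{R}f|_{d=0} = \beta\tilde f - D_{\theta_0}\beta\,g$, while applying $D_{\theta_0}$ to $\mathcal{R}f(\vx',0,\theta_0)$ gives $D_{\theta_0}\mathcal{R}f(\vx',0,\theta_0) - \zeta = D_{\theta_0}\alpha\,g - \alpha\tilde f$; eliminating $g$ between these two relations yields the formula with denominator $\beta D_{\theta_0}\alpha - \alpha D_{\theta_0}\beta$, which a direct computation identifies with the assumed nonvanishing quantity divided by $\lambda_\vuu^2\lambda_\vv^2$. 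The only genuinely delicate point is the bookkeeping: one must track that $\partial_d$ applied to a term evaluated at $\vx'+d\theta_0$ produces $D_{\theta_0}$ of that term, and that $D_{\theta_0}I = 0$ so the $\zeta$-contribution collects correctly. Since the arithmetic is otherwise identical to the planar case, I expect no structural obstacle beyond this careful transfer of the hypotheses and the verification that the reduced slice transform is literally the two-dimensional one.
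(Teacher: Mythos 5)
Your proposal is correct and follows essentially the same route as the paper: the authors likewise note the slice-by-slice reduction to the planar case along $\theta_0$-lines and then prove the theorem by defining $g(\vx')=\int_0^\infty\tilde f(\vx'+t\theta_0)\,dt$, deriving exactly your two relations $D_{\theta_0}\mathcal{R}f(\vx',0,\theta_0)=\zeta-\alpha\tilde f+gD_{\theta_0}\alpha$ and $\partial_d\mathcal{R}f|_{d=0}=\beta\tilde f-gD_{\theta_0}\beta$, and eliminating $g$ in perfect analogy with Theorem~\ref{thm:inversion-formula-2d}. Your verification that the nondegeneracy condition and the sign/unit-length hypotheses transfer to each slice is a correct (and slightly more explicit) account of the same argument.
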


\begin{proof}
Let us define the function $g \coloneqq \R^{n-1} \to \R$ by
\begin{equation}
g(\vx')
\coloneqq
\int_0^\infty
\tilde f(\vx' + t\theta_0)
\,\D t.
\end{equation}
Directly computing the derivatives $D_{\theta_0}$ and $\partial_d$ of the data we find the formulas
\begin{equation}
\label{eqn:reconstruction-nd-1}
D_{\theta_0}\mathcal{R}f(\vx',0,\theta_0)
=
\zeta(\vx')
-
\alpha(\vx')\tilde f(\vx')
+
g(\vx')D_{\theta_0}\alpha(\vx')
\end{equation}
and
\begin{equation}
\label{eqn:reconstruction-nd-2}
\left.
\partial_d
\mathcal{R}f(\vx',d,\theta_0)
\right|_{d=0}
=
\beta(\vx')\tilde f(\vx')
-
g(\vx')D_{\theta_0}\beta(\vx').
\end{equation}
The proof can be completed in perfect analogy with the proof of Theorem~\ref{thm:inversion-formula-2d}.
\end{proof}

\begin{remark}
Formulas~\eqref{eqn:reconstruction-nd-1} and~\eqref{eqn:reconstruction-nd-2} can be used to derive a connection between the head-wave transform of $f$ and the X-ray transform of $\tilde f$ on the gliding surface $\R^{n-1}$.

Assume that $D_{\theta_0}\beta(\vx' + s\theta_0)$ has a well-defined limit $C \in \R \setminus \{0\}$ as $s \to -\infty$. It follows from~\eqref{eqn:reconstruction-nd-2} that
\begin{equation}
\left.
\partial_d
\mathcal{R}f(\vx'+s\theta_0,d,\theta_0)
\right|_{d=0}
=
\beta(\vx'+s\theta_0)\tilde f(\vx'+s\theta_0)
-
D_{\theta_0}\beta(\vx'+s\theta_0)
\int_{s}^\infty \tilde f(\vx' + t\theta_0)\,\D t.
\end{equation}
Hence taking the limit $s \to -\infty$ gives
\begin{equation}
\lim_{s \to \-\infty}
\left.
\partial_d
\mathcal{R}f(\vx'+s\theta_0,d,\theta_0)
\right|_{d=0}
=
-C\mathcal{X}\tilde f(\vx',\theta_0)
\end{equation}
where $\mathcal{X}f(\vx',\theta_0)$ denotes the standard longitudinal X-ray transform of $\tilde f$ along the line through $\vx'$ into the direction of $\theta_0$ in $\R^{n-1}$. In conclusion, we have shown that the head-wave transform data can be used to determine the X-ray transform of $\tilde f$ into the directions allowed by the considered problem.
\end{remark}

Our gauge invariance result (Theorem~\ref{th:Gauge invariance fixed theta}) for fixed angle $\theta_0 \in \sphere^{n-2}$
assumes that the vector fields $\vuu$ and $\vv$ are independent of the variable $\vx' \in \R^{n-1}$. Moreover, $\vuu(\theta) = (\lambda_\vuu\theta,u_n(\theta))$ and $\vv(\theta) = (\lambda_\vv\theta,v_n(\theta))$ for some constants $\lambda_\vuu,\lambda_\vv \in \R$. In this case the head-wave transform takes the form
\begin{equation}
\mathcal{R}f(\vx,d,\theta_0)
=
\int_0^\infty
f(\vx' + t\lambda_\vuu\theta_0,tu_n(\theta_0))
\,\D t
+
\int_0^d
f(\vx' + t\theta_0,0)
\,\D t
+
\int_0^\infty
f(\vx'+d\theta_0+t\lambda_\vv\theta_0,tv_n(\theta_0))
\,\D t.
\end{equation}

We use a slice-by-slice approach to promote the $2$-dimensional Theorem~\ref{thm:kernel-characterization-2d-constants} to obtain the following.

\begin{theorem}[Gauge invariance for fixed $\theta$]
\label{th:Gauge invariance fixed theta}
Assume that the vector fields $\vuu$ and $\vv$ are smooth and independent of the variable $\vx' \in \R^{n-1}$ i.e. $\vuu$ and $\vv$ are functions of the type $\sphere^{n-2} \to \R^{n}$. In addition, assume that
\begin{equation}
\lambda_\vuu < 0,
\quad
u_n(\theta) > 0,
\quad
\lambda_\vv > 0
\quad\text{and}\quad
v_n(\theta) > 0.
\end{equation}
Let $f \in C^\infty_c(\R^n_+)$ be a smooth compactly supported function. Fix an angle $\theta_0 \in \sphere^{n-2}$. Then the following are equivalent:
\begin{enumerate}
    \item[(i)] $\mathcal{R}f(\vx',d,\theta_0) = 0$ for all $(\vx',d) \in \R^{n-1} \times \R_+$.
    \item[(ii)] The function $f$ vanishes on $\R^{n-1} \times \{0\}$ and $\displaystyle f(\vx) = \nabla_{\vuu(\theta_0)}\nabla_{\vv(\theta_0)}\varphi(\vx)$ for some $\varphi \in C^\infty_c(\R^n_+)$ that vanishes on $\R^{n-1} \times \{0\}$.
\end{enumerate}
\end{theorem}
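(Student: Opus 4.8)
The plan is to reduce everything to the two-dimensional Theorem~\ref{thm:kernel-characterization-2d-constants} by foliating $\R^n_+$ into the two-planes that carry the rays with gliding direction $\theta_0$. The central observation is that, for a fixed $\theta_0$, every such ray lies in a single two-plane and, crucially, both operators $\nabla_{\vuu(\theta_0)}$ and $\nabla_{\vv(\theta_0)}$ are \emph{tangent} to these planes. Indeed, since $\vuu(\theta_0) = (\lambda_\vuu\theta_0, u_n(\theta_0))$ and $\vv(\theta_0) = (\lambda_\vv\theta_0, v_n(\theta_0))$, we have $\nabla_{\vuu(\theta_0)} = \lambda_\vuu D_{\theta_0} + u_n(\theta_0)\partial_{x_n}$ and $\nabla_{\vv(\theta_0)} = \lambda_\vv D_{\theta_0} + v_n(\theta_0)\partial_{x_n}$, and both $D_{\theta_0} = \theta_0\cdot\nabla_{\vx'}$ and $\partial_{x_n}$ differentiate only within each plane. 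Concretely, I would complete $\theta_0$ to an orthonormal basis of $\R^{n-1}$ and write $\vx' = s\theta_0 + w$ with $s \in \R$ and $w \in \theta_0^\perp \cong \R^{n-2}$, so that $\R^n_+$ is foliated by the planes $P_w = \{(s\theta_0 + w, x_n) : s \in \R,\, x_n \geq 0\}$.

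Next I would record the slice identification. For each $w$ define $f_w(s,x_n) := f(s\theta_0 + w, x_n) \in C^\infty_c(\R^2_+)$; a direct substitution shows that $\mathcal{R}f(s\theta_0 + w, d, \theta_0)$ equals the two-dimensional head wave transform of $f_w$ with the constant unit vectors $\vuu_0 = (\lambda_\vuu, u_n(\theta_0))$ and $\vv_0 = (\lambda_\vv, v_n(\theta_0))$, which satisfy the sign hypotheses of Theorem~\ref{thm:kernel-characterization-2d-constants}. The implication (2)~$\Rightarrow$~(1) is then immediate: if $\varphi$ vanishes on $\R^{n-1}\times\{0\}$ and $f = \nabla_{\vuu(\theta_0)}\nabla_{\vv(\theta_0)}\varphi$, then by tangency $f_w = \nabla_{\vuu_0}\nabla_{\vv_0}\varphi_w$ with $\varphi_w$ vanishing on $\R\times\{0\}$, so the two-dimensional result gives $\mathcal{R}f(\cdot,\cdot,\theta_0)\equiv 0$ on every slice. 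Conversely, hypothesis (1) says exactly that the two-dimensional transform of $f_w$ vanishes for every $w$, so Theorem~\ref{thm:kernel-characterization-2d-constants} yields, slicewise, that $f_w$ vanishes on $\R\times\{0\}$ (hence $f$ vanishes on $\R^{n-1}\times\{0\}$) and that $f_w$ lies in the range of $\nabla_{\vuu_0}\nabla_{\vv_0}$.

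Rather than invoke the existence statement abstractly on each slice, I would carry the \emph{explicit} construction from the proof of Theorem~\ref{thm:kernel-characterization-2d-constants} along the transverse parameter $w$, by writing the corresponding integral formulas directly on $\R^n_+$. That is, I would set
\[
\psi(\vx',x_n)
:=
\int_0^{x_n/u_n(\theta_0)}
f\bigl((\vx' - \tfrac{\lambda_\vuu}{u_n(\theta_0)}\,x_n\,\theta_0,\,0) + t\,\vuu(\theta_0)\bigr)\,dt,
\]
and then define $\varphi$ by the analogous formula with $\psi$ and $\vv(\theta_0)$ in place of $f$ and $\vuu(\theta_0)$. On each plane $P_w$ these reduce exactly to the two formulas used in the two-dimensional proof, so $\nabla_{\vuu(\theta_0)}\psi = f$, $\nabla_{\vv(\theta_0)}\varphi = \psi$, and both $\psi$ and $\varphi$ vanish on $\R^{n-1}\times\{0\}$; combining gives $f = \nabla_{\vuu(\theta_0)}\nabla_{\vv(\theta_0)}\varphi$.

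The main obstacle, and the only place needing genuine care, is confirming that this procedure produces a \emph{single} function $\varphi \in C^\infty_c(\R^n_+)$ satisfying the full $\R^n$ gauge identity, rather than merely an unrelated family of planar identities. Smoothness and compact support follow because the formulas are integrals of the $C^\infty_c$ function $f$ over bounded $t$-intervals, with manifestly smooth dependence on all of $(\vx',x_n)$ including the transverse variable $w$. The upgrade from the planar relations $f_w = \nabla_{\vuu_0}\nabla_{\vv_0}\varphi_w$ to the genuine $\R^n$ identity is precisely the tangency observation of the first paragraph: since $\nabla_{\vuu(\theta_0)}$ and $\nabla_{\vv(\theta_0)}$ involve no derivatives transverse to the foliation, their restriction to each $P_w$ coincides with the slicewise operators $\nabla_{\vuu_0},\nabla_{\vv_0}$, so the pointwise relation holds throughout $\R^n_+$. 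I would therefore spend the bulk of the written proof verifying this tangency bookkeeping and the regularity of $\psi$ and $\varphi$, the remaining computations being routine consequences of the two-dimensional theorem.
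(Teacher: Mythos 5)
Your proposal is correct and follows essentially the same route as the paper, which likewise presents this theorem as a slice-by-slice promotion of Theorem~\ref{thm:kernel-characterization-2d-constants} and builds the same potentials $\psi$ and $\varphi$ by integrating $f$ along the rays in the directions $\vuu(\theta_0)$ and $\vv(\theta_0)$ after first deriving that $f$ vanishes on the gliding surface and that the two ray integrals vanish separately. One small correction: compact support of $\psi$ and $\varphi$ does not follow merely from integrating over bounded $t$-intervals, but from the separate vanishing of the two ray integrals, which is what makes your bounded-integral formula agree with the paper's half-line formula $-\int_0^\infty f(\cdot+t\vuu(\theta_0))\,dt$ outside a compact set.
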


\begin{proof}
Assuming (ii) to be true, item (i) is proved by an application of the fundamental theorem of calculus. Let us thus assume that (i) is true and we will prove (ii).

First, take the derivative $\partial_d$ of the equation $\partial_d \mathcal{R}f(\vx,d,\theta_0) = 0$ to obtain
\begin{equation}
\lambda_\vv D_{\theta_0}f(\vx'+d\theta_0,0)
-
D_{\theta_0}f(\vx'+d\theta_0,0) = 0.
\end{equation}
Since $\lambda_\vv \neq 1$, it follows that $D_{\theta_0}f(\vx',\theta_0) = 0$ for all $\vx' \in \R^{n-1}$ and hence $f$ vanishes on $\R^{n-1} \times \{0\}$.

Now, the head-wave transform data of $f$ gives the relation
\begin{equation}
\label{eqn:gauge-fixed-angle-proof-1}
\int_0^\infty
f(\vx'+t\lambda_\vuu\theta_0,tu_n(\theta_0))
\,\D t
+
\int_0^\infty
f(\vx'+d\theta_0+t\lambda_\vuu\theta_0,tu_n(\theta_0))
\,\D t
=
0
\end{equation}
for all $\vx' \in \R^{n-1}$ and $d \in \R_+$. We conclude that the integrals in~\eqref{eqn:gauge-fixed-angle-proof-1} vanish separately i.e.
\begin{equation}
\label{eqn:gauge-fixed-angle-proog-2}
\int_0^\infty
f((\vx',0)+t\vuu(\theta_0))
\,\D t
=0
\end{equation}
and
\begin{equation}
\label{eqn:gauge-fixed-angle-proof-3}
\int_0^\infty
f((\vx',0)+t\vv(\theta_0))
\,\D t
=0
\end{equation}
for all $\vx' \in \R^{n-1}$. Then we define the function
\begin{equation}
\psi(\vx',x_n)
\coloneqq
-\int_0^\infty
f((\vx,x_n)+t\vuu(\theta_0))
\,\D t.
\end{equation}
It follows from~\eqref{eqn:gauge-fixed-angle-proog-2} that $\psi(\vx',0) = 0$ for all $\vx' \in \R^{n-1}$. Since $f$ is smooth and compactly supported, the same holds for $\psi$. Also, it is straightforward to check that $\nabla_{\vuu(\theta_0)} \psi(\vx',x_n) = f(\vx',x_n)$ for all $(\vx',x_n) \in \R^n_+$.

As the final step we construct the function
\begin{equation}
\varphi(\vx',x_n)
=
\int_0^\infty
\psi((\vx',x_n)+t\vv(\theta_0))
\,\D t.
\end{equation}
We see that $\varphi(\vx,0) = 0$ for all $\vx' \in \R^{n-1}$. In fact, it follows from~\eqref{eqn:gauge-fixed-angle-proof-3} that
\begin{equation}
\int_0^\infty
\nabla_{\vuu(\theta_0)}
\psi((\vx',0)+t\vv(\theta_0))
\,\D t
=
\int_0^\infty
f((\vx',0)+t\vv(\theta_0))
\,\D t
=
0
\end{equation}
and therefore also
\begin{equation}
\varphi(\vx',0)
=
\int_0^\infty
\psi((\vx',0)+t\vv(\theta_0))
\,\D t
=
0.
\end{equation}
We have $\varphi \in C^\infty_c(\R^n_+)$ since $\psi \in C^\infty_c(\R^n_+)$ and by a straightforward computation we see that 
\begin{equation}
\nabla_{\vuu(\theta_0)}
\nabla_{\vv(\theta_0)}
\varphi(\vx',x_n)
=
f(\vx',x_n)
\end{equation}
for all $(\vx',x_n) \in \R^n_+$.
\end{proof}

\subsection{The full data transform}

Assume that $n \geq 3$. In this section, the gliding surface is assumed to be $\R^{n-1} \times \{0\}$ and we only consider constant vectors $\vuu$ and $\vv$. Our observation is that, even full data i.e. the knowledge of $\mathcal{R}f(\vx',d,\theta)$ for all $\vx' \in \R^{n-1}$, $d \in \R_+$ and $\theta \in \sphere^{n-2}$ is not enough to determine $f$ uniquely.
In this case, the head-wave transform of a sufficiently smooth function $f$ is
\begin{equation}
\mathcal{R}f(\vx',d,\theta)
=
\int_0^\infty
f((\vx',0)+t\vuu)
\,\D t
+
\int_0^d
f(\vx'+t\theta,0)
\,\D t
+
\int_0^\infty
f((\vx'+d\theta,0)+t\vv)
\,\D t.
\end{equation}

\begin{example}
\label{ex:general-non-uniqueness}
Consider a function $f \in C^\infty_c(\R^n)$ and let $h \colon \R \to \R$ be a non-zero smooth function so that $h(0) = 0$, $h(x) \to 0$ as $x \to \pm\infty$ and $h'(0) = 0$. Define the function $g \colon \R^n \to \R$ by $g(\vx',x_n) \coloneqq f(\vx',x_n) + h'(x_n)$. Then the head-wave transforms of the functions $g$ and $f$ are equal. Indeed, for all $\vx' \in \R^{n-1}$, $d \in \R_+$ and $\theta \in \sphere^{n-2}$ we have
\begin{equation}
\mathcal{R}g(\vx',d,\theta)
=
\mathcal{R}f(\vx',d,\theta)
+
\int_0^\infty
h'(tu_n)
\,\D t
+
\int_0^d
h'(0)
\,\D t
+
\int_0^\infty
h'(tv_n)
\,\D t
=
\mathcal{R}f(\vx',d,\theta).
\end{equation}
\end{example}

\section{The head-wave transform with gliding on a general submanifold}
\label{sec:gliding-on-general-surface}

In this section we generalize the head-wave transform to situations where the gliding surface is not necessarily the flat hypersurface $\{x_n = 0\} \subseteq \R^n$ but a more general submanifold. We extend our results for the flat case into this new general case in $2$-dimensions. It is clear that to treat the most general settings allowed by the general definition, one needs more sophisticated tools. Such study is outside the scope of this article.

\subsection{Generalized head-wave transform}

Let $M$ be an embedded submanifold of $\R^n$. Points on $\R^n$ are denoted by $\vx$ and points on $M$ by $\vx'$. We equip $M$ with the Riemannian metric induced on it by the ambient Euclidean metric of $\R^n$. Assume that $M$ is complete and orientable.
Let $SM$ be the unit sphere bundle of $M$ by $(\vx',\theta) \in SM$ where $\vx' \in M$ and $\theta \in S_{\vx'}M$. The fiber of the bundle at $x \in M$ is the unit sphere in $T_{\vx'}M$ i.e. $\theta \in S_{\vx'}M$ if $\abs{\theta} = 1$ where the length of $\vv$ is defined using the Riemannian metric of $M$.
Let $\vuu$ and $\vv$ be smooth functions $SM \to \R^n$ and suppose that $\abs{\vuu(\vx',\theta)} = \abs{\vv(\vx',\theta)} = 1$ for all $(\vx',\theta) \in SM$.

We impose the following restrictions on $\vuu$ and $\vv$. Let $N$ be a unit normal vector field of $M$. Then we can decompose the vectors $\vuu$ and $\vv$ as
\begin{equation}
\vuu(\vx',\theta)
=
\vuu_N(\vx',\theta)N(\vx')
+
\vuu_M(\vx',\theta)
\end{equation}
and
\begin{equation}
\vv(\vx',\theta)
=
\vv_N(\vx',\theta)N(\vx')
+
\vv_M(\vx',\theta)
\end{equation}
where $\vuu_N$ and $\vv_N$ are smooth scalar functions on $SM$ and $\vuu_M$ and $\vv_M$ are smooth sections of the pullback bundle $\pi^*TM$ over $SM$ where $\pi \colon SM \to M$ is the bundle map. As such $\vuu_M(\vx',\theta),\vv_M(\vx',\theta) \in T_{\vx'}M$.
We assume that
\begin{enumerate}
    \item[(i)] $\vuu_M(\vx',\theta) \parallel \theta$ and $\vv_M(\vx',\theta) \parallel \theta$ for all $(\vx',\theta) \in SM$, and
    
    \item[(ii)] $\vuu_N(\vx,\theta) = \vuu(\vx',\theta) \cdot N(\vx') > 0$ and $\vv(\vx',\theta) \cdot N(\vx') > 0$ for all $(\vx',\theta) \in SM$.
\end{enumerate}

For a smooth compactly supported function $f \in C^\infty_c(\R^n)$ define its head-wave transform $\mathcal{R}f \colon SM \times \R_+ \to \R$ by
\begin{equation}
\label{eqn:general-hwt}
\mathcal{R}f(\vx',\theta,d)
\coloneqq
\int_0^\infty
f(\vx' + s\vuu(\vx',\theta))
\,ds
+
\int_0^d
f(\gamma_{\vx',\theta}(s))
\,ds
+
\int_0^\infty
f(\gamma_{\vx',\theta}(d) + s\vv(\gamma_{\vx',\theta}(d),\dot\gamma_{\vx',\theta}(d)))
\,ds
\end{equation}
where $\gamma_{\vx',\theta}$ is the unique unit speed geodesic of $M$ with $\gamma_{\vx',\theta}(0) = \vx'$ and $\dot\gamma_{\vx',\theta}(0) = \theta$. 
The additions in~\eqref{eqn:general-hwt} are well defined since $M$ is an embedded submanifold and as such is identified with a subset of $\R^n$. For the same reason, it makes sense to evaluate $f$ for example at $\vx' \in M$.

Let $M = \R^{n-1} \approx \R^{n-1} \times \{0\} \subset \R^n$ in which case $SM = \R^{n-1} \times \sphere^{n-2}$. We freely identify the points $\vx' \in M$ with points $(\vx',0) \in \R^n$. Then the head-wave transform reduces to
\begin{equation}
\mathcal{R}f(\vx',\theta,d)
=
\int_0^\infty
f(\vx'+s\vuu(\vx',\theta))
\,ds
+
\int_0^d
f(\vx' + s\theta)
\,ds
+
\int_0^\infty
f(\vx'+d\theta+s\vv(\vx'+d\theta,\theta))
\,ds.
\end{equation}

This is the same transform we defined in Section~\ref{sec:introduction} and studied in Sections~\ref{sec:refraction-plane} and~\ref{sec:general-dimensions}.

\subsection{Gliding on smooth curves}

In this section we consider the head-wave transform as introduced in the previous section, but in $\R^2$. Since the gliding surface in this case is just a curve, the only geodesics are line segments of this curve. Hence the definitions reduce to the following.

Let $\gamma \colon \R \to \R^2$ be a smooth curve without self-intersections. Let $\vuu$ and $\vv$ be smooth vector fields along $\gamma$ i.e. for all $t \in \R$ it holds that $\vuu(t),\vv(t) \in T_{\gamma(t)}\R^2 \approx \R^2$. Assume that $\vuu$ and $\vv$ have unit length and that
\begin{equation}
\label{eqn:assumptions-on-u-v-curve-case}
\vuu(t) \cdot \dot\gamma(t) < 0,
\quad
\vuu(t) \cdot \dot\gamma^\perp(t) > 0,
\quad
\vv(t) \cdot \dot\gamma(t) > 0
\quad\text{and}\quad
\vv(t) \cdot \dot\gamma^\perp(t) > 0
\end{equation}
where $\dot\gamma^\perp(t)$ is the rotation of $\dot\gamma(t)$ by $90$ degrees counter clockwise. Let $f \in C^\infty_c(\R^2)$ be a smooth and compactly supported function. The $\gamma$-head-wave transform of $f$ is the function $\mathcal{R}f \colon \R \times \R_+ \to \R$ defined by the formula
\begin{equation}
\mathcal{R}f(t_0,d)
=
\int_0^\infty
f(\gamma(t_0) + s\vuu(t_0))
\,ds
+
\int_0^d
f(\gamma(t_0+s))
\,ds
+
\int_0^\infty
f(\gamma(t_0+d) + s\vv(t_0+d))
\,ds
\end{equation}

To analyze the $\gamma$-head-wave transform we have to make some assumptions on the geometry of $\gamma$. We assume that the vector fields $\dot\gamma$ and $\dot\gamma^\perp$ along $\gamma$ are extendable in the sense that there is a neighborhood $U \subseteq \R^2$ of $\gamma(\R)$ and smooth vector fields $\ve \colon U \to \R^2$ and $\ve^\perp \colon U \to \R^2$ so that $\ve(\gamma(t)) = \dot\gamma(t)$ and $\ve^\perp(\gamma(t)) = \dot\gamma^\perp(t)$ for all $t \in \R$. We assume in addition that $\{\ve,\ve^\perp\}$ is a global frame over $U$ i.e. the vectors $\ve(p)$ and $\ve^\perp(p)$ form a basis of $T_pU \approx \R^2$ for all $p \in U$. From now on we freely identify any tangent vectors with points in $\R^2$ without an explicit mention.

Consider a function $f \in C^\infty_c(U)$ where $U$ is as above. Suppose there is a function $\tilde f \colon \R \to \R$ so that
\begin{equation}
f(p)
=
f((p \cdot \ve)\ve + (p \cdot \ve^\perp)\ve^\perp)
=
\tilde f(p \cdot \ve)
\end{equation}
for all $p \in U$. Let us denote
\begin{equation}
\gamma(t)
=
(\gamma(t) \cdot \dot\gamma(t))\dot\gamma(t)
+
(\gamma(t) \cdot \dot\gamma^\perp(t))\dot\gamma^\perp(t)
\eqqcolon
\gamma_1(t)\dot\gamma(t)
+
\gamma_2(t)\dot\gamma^\perp(t)
\end{equation}
and similarly for the vector fields $\vuu$ and $\vv$. In particular, $u_1(t) = \vuu(t) \cdot \dot\gamma(t)$ and $v_1(t) = \vv(t) \cdot \dot\gamma(t)$. Given these assumption our $\gamma$-head-wave transform can be written as
\begin{equation}
\label{eqn:gamma-transform-reduction-1}
\mathcal{R}f(t_0,d)
=
\int_0^\infty
\tilde f(\gamma_1(t_0) + su_1(t_0))
\,ds
+
\int_0^d
\tilde f(\gamma_1(t_0+s))
\,ds
+
\int_0^\infty
\tilde f(\gamma_1(t_0+d) + sv_1(t_0+d))
\,ds
\end{equation}
It follows from our assumptions~\eqref{eqn:assumptions-on-u-v-curve-case} that after performing appropriate changes of coordinates, equation~\eqref{eqn:gamma-transform-reduction-1} reduces to
\begin{equation}
\label{eqn:gamma-transform-reduction-2}
\mathcal{R}f(t_0,d)
=
-
\frac{1}{u_1(t)}
\int_{-\infty}^{\gamma_1(t_0)}
\tilde f(s)
\,ds
+
\int_{t_0}^{d+t_0}
\tilde f(\gamma_1(s))
\,ds
+
\frac{1}{v_1(t_0+d)}
\int_{\gamma_1(t_0+d)}^\infty
\tilde f(s)
\,ds.
\end{equation}
This is due to the fact that $u_1(t) = \vuu(t) \cdot \dot\gamma(t) < 0$ and $v_1(t) = \vv(t) \cdot \dot\gamma(t) > 0$.
Given this form for the transform we are able to prove the following.

\begin{theorem}[Inversion formula for gliding on a smooth curve]
\label{thm:inversion-smooth-curve}
Let $\gamma \colon \R \to \R^2$ be a smooth curve without self-intersetions. Let $\vuu$ and $\vv$ be smooth unit vector fields along $\gamma$ with
\begin{equation}
\vuu(t) \cdot \dot\gamma(t) < 0,
\quad
\vuu(t) \cdot \dot\gamma^\perp(t) > 0,
\quad
\vv(t) \cdot \dot\gamma(t) > 0
\quad\text{and}\quad
\vv(t) \cdot \dot\gamma^\perp(t) > 0.
\end{equation}
Suppose that $\vuu$ and $\vv$ are extendable to a neighborhood $U$ of $\gamma(\R)$ in the sense explained above.
Let $f \in C^\infty_c(U)$ be a smooth compactly supported function and suppose there is a function $\tilde f \colon \R \to \R$ satisfying $f(p) = \tilde f(p \cdot \ve)$ where $\ve$ is the extension of $\dot\gamma$ defined as above.

\noindent Furthermore, assume that
\begin{equation}
-
\left(
\frac{u_1'(t_0)}{u_1(t_0)^2}
+
\frac{v_1'(t_0)}{v_1(t_0)^2}
\right)
\left(
1
-
\frac{\gamma_1'(t_0)}{v_1(t_0)}
\right)
-
\frac{v_1'(t_0)}{v_1(t_0)^2}
\left(
\frac{1}{u_1(t_0)}
+
\frac{1}{v_1(t_0)}
\right)
\neq
0
\end{equation}
for all $t_0 \in \R$. Then $f$ can be uniquely and explicitly be recovered from the knowledge of its $\gamma$-head-wave transform $\mathcal{R}f(t_0,d)$ and the integral
\begin{equation}
\int_{-\infty}^\infty
\tilde f(s)
\,ds.
\end{equation}
\end{theorem}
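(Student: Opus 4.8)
The plan is to run, for the reduced expression~\eqref{eqn:gamma-transform-reduction-2}, the same two-derivative linear-algebra argument that proves the planar inversion in Theorem~\ref{thm:inversion-formula-2d}. After the change of coordinates that produced~\eqref{eqn:gamma-transform-reduction-2}, the $\gamma$-head wave transform is assembled from the single-variable profile $\tilde f$ in almost exactly the flat-case form; the only genuinely new feature is that the arc-length variable enters the middle integral through the composition $\tilde f(\gamma_1(s))$. I would first introduce the antiderivative $g(x) \coloneqq \int_x^\infty \tilde f(s)\,ds$, note that $g'(x) = -\tilde f(x)$ and $\int_{-\infty}^a \tilde f(s)\,ds = \int_{-\infty}^\infty \tilde f(s)\,ds - g(a)$, and rewrite~\eqref{eqn:gamma-transform-reduction-2} so that the two outer integrals are expressed through $g$ and the prescribed total mass $\int_{-\infty}^\infty \tilde f$.

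Next I would differentiate the data in the two available parameters and evaluate at $d = 0$. Differentiating in the gliding length $d$, applying the fundamental theorem of calculus to the middle integral and the chain rule to $g(\gamma_1(t_0+d))$, and setting $d = 0$ gives
\begin{equation}
\left.\partial_d\mathcal{R}f(t_0,d)\right|_{d=0}
=
\left(1 - \frac{\gamma_1'(t_0)}{v_1(t_0)}\right)\tilde f(\gamma_1(t_0))
-
\frac{v_1'(t_0)}{v_1(t_0)^2}g(\gamma_1(t_0)).
\end{equation}
Differentiating instead in the starting parameter $t_0$ and again setting $d = 0$, the two endpoint contributions of the middle integral cancel and one obtains
\begin{equation}
\partial_{t_0}\mathcal{R}f(t_0,0)
=
\frac{u_1'(t_0)}{u_1(t_0)^2}\int_{-\infty}^\infty \tilde f(s)\,ds
-
\left(\frac{u_1'(t_0)}{u_1(t_0)^2} + \frac{v_1'(t_0)}{v_1(t_0)^2}\right)g(\gamma_1(t_0))
-
\gamma_1'(t_0)\left(\frac{1}{u_1(t_0)} + \frac{1}{v_1(t_0)}\right)\tilde f(\gamma_1(t_0)).
\end{equation}
The factors $\gamma_1'(t_0)$ are precisely the corrections introduced by the bending of the gliding curve: in the flat situation $\gamma_1(t_0) = t_0$ and $\gamma_1' \equiv 1$, whereupon these two identities reduce to those in the proof of Theorem~\ref{thm:inversion-formula-2d}.

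These two identities form a $2 \times 2$ linear system for the pair $\big(\tilde f(\gamma_1(t_0)),\, g(\gamma_1(t_0))\big)$ whose right-hand side is read off from $\mathcal{R}f$ and the single number $\int_{-\infty}^\infty \tilde f$. A direct computation identifies the determinant of this system with the left-hand side of the assumed non-degeneracy condition, so that the hypothesis is exactly the invertibility criterion for the system. Cramer's rule then delivers an explicit closed-form expression for $\tilde f(\gamma_1(t_0))$ in terms of $\left.\partial_d\mathcal{R}f\right|_{d=0}$, $\partial_{t_0}\mathcal{R}f(\cdot,0)$ and $\int_{-\infty}^\infty \tilde f$, in complete analogy with~\eqref{eqn:inversion-formula-2d}.

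The final step is to pass from the recovered composition $t_0 \mapsto \tilde f(\gamma_1(t_0))$ to $\tilde f$ itself, and then to $f$ through $f(p) = \tilde f(p\cdot\ve(p))$. Since $\gamma_1(t) = \gamma(t)\cdot\dot\gamma(t)$ is exactly the $\ve$-coordinate along the curve, reconstructing $f$ on $U$ amounts to knowing $\tilde f$ on the whole range of $\ve$-coordinates met by $\supp(f)$, which I would recover by inverting the map $t_0 \mapsto \gamma_1(t_0)$. I expect the derivative bookkeeping in the first steps to be routine; the points demanding care are justifying the change of coordinates behind~\eqref{eqn:gamma-transform-reduction-2}, tracking the $\gamma_1'$ terms so that the system determinant lines up with the stated hypothesis, and the concluding inversion of $t_0 \mapsto \gamma_1(t_0)$ that turns the reconstruction of $\tilde f \circ \gamma_1$ into a reconstruction of $f$.
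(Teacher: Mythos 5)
Your proposal follows the paper's proof essentially step for step: bring the transform to the one-variable form~\eqref{eqn:gamma-transform-reduction-2}, introduce $g(x)=\int_x^\infty\tilde f$, differentiate in $t_0$ and in $d$ at $d=0$, and solve the resulting $2\times 2$ linear system for $\tilde f(\gamma_1(t_0))$. One substantive difference: your $\partial_{t_0}$ identity carries the factor $\gamma_1'(t_0)$ on the coefficient $\bigl(\tfrac{1}{u_1(t_0)}+\tfrac{1}{v_1(t_0)}\bigr)$ of $\tilde f(\gamma_1(t_0))$, which is what the chain rule applied to the boundary terms $\int_{-\infty}^{\gamma_1(t_0)}\tilde f$ and $\int_{\gamma_1(t_0)}^{\infty}\tilde f$ actually produces; the paper's proof drops this factor while keeping the analogous $\gamma_1'$ in the $\partial_d$ identity. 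As a result your system determinant is (up to sign) $\alpha'(1-\gamma_1'\beta)+\gamma_1'\alpha\beta'$ rather than the paper's $\alpha'(1-\gamma_1'\beta)+\alpha\beta'$, so your claim that the determinant ``identifies with the assumed non-degeneracy condition'' holds literally only when $\gamma_1'\equiv 1$; the hypothesis your (more careful) bookkeeping requires is the non-vanishing of your determinant, not the one stated in the theorem. Your closing step of inverting $t_0\mapsto\gamma_1(t_0)$ to pass from $\tilde f\circ\gamma_1$ to $\tilde f$, and hence to $f$ on $U$, is a legitimate point of care that the paper leaves implicit.
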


\begin{proof}
Under the assumptions stated in the theorem, the $\gamma$-head-wave transform can be brought to the form~\eqref{eqn:gamma-transform-reduction-2}. We take the derivatives of this equation with respect to $t_0$ and $d$ to find that 
\begin{equation}
\partial_{t_0}\mathcal{R}f(t_0,0)
=
\frac{u_1'(t_0)}{u_1(t_0)^2}
\int_{-\infty}^\infty
\tilde f(s)
\,ds
-
\left(
\frac{u_1'(t_0)}{u_1(t_0)^2}
+
\frac{v_1'(t_0)}{v_1(t_0)^2}
\right)
g(\gamma_1(t_0))
-
\left(
\frac{1}{u_1(t_0)}
+
\frac{1}{v_1(t_0)}
\right)
\tilde f(\gamma_1(t_0))
\end{equation}
and
\begin{equation}
\left.
\partial_d\mathcal{R}f(t_0,d)
\right|_{d=0}
=
\left(
1
-
\frac{\gamma'_1(t_0)}{v_1(t_0)}
\right)
\tilde f(\gamma_1(t_0))
-
\frac{v_1'(t_0)}{v_1(t_0)^2}
g(\gamma_1(t_0))
\end{equation}
where
\begin{equation}
g(x)
=
\int_x^\infty
\tilde f(s)
\,ds.
\end{equation}
To simplify these equations we let
\begin{equation}
\zeta(x)
\coloneqq
\frac{u_1'(x)}{u_1(x)^2}
\int_{-\infty}^\infty
\tilde f(s)
\,ds,
\quad
\alpha(x)
=
\frac{1}{u_1(x)}
+
\frac{1}{v_1(x)}
\quad\text{and}\quad
\beta(x)
=
\frac{1}{v_1(x)}.
\end{equation}
Then it holds that
\begin{equation}
\partial_{t_0}\mathcal{R}f(t_0,0)
=
\zeta(t_0)
-
\alpha(t_0)
\tilde f(\gamma_1(t_0))
+
\alpha'(t_0)
g(\gamma_1(t_0))
\end{equation}
and
\begin{equation}
\left.
\partial_d\mathcal{R}f(t_0,d)
\right|_{d=0}
=
(1 - \gamma_1'(t_0)\beta(t_0))\tilde f(\gamma_1(t_0))
+
\beta'(t_0)g(\gamma_1(t_0)).
\end{equation}
Hence
\begin{equation}
\label{eqn:proof-of-inversion-formula-curved}
\alpha'(t_0)\partial_d\mathcal{R}f(t_0,0)
-
\beta'(t_0)[\left.\partial_{t_0}\mathcal{R}f(t_0,d)\right|_{d=0}
-
\zeta(t_0)]
=
[
\alpha'(t_0)(1-\gamma_1'(t_0)\beta(t_0))
+
\beta'(t_0)\alpha(t_0)
]
\tilde f(\gamma_1(t_0)).
\end{equation}
Therefore we are able to recover $\tilde f$ and hence $f$ in the neighborhood $U$ of $\gamma$ given that the coefficient on the right-hand side in~\eqref{eqn:proof-of-inversion-formula-curved} is non-zero which ammounts to
\begin{equation}
-
\left(
\frac{u_1'(t_0)}{u_1(t_0)^2}
+
\frac{v_1'(t_0)}{v_1(t_0)^2}
\right)
\left(
1
-
\frac{\gamma_1'(t_0)}{v_1(t_0)}
\right)
-
\frac{v_1'(t_0)}{v_1(t_0)^2}
\left(
\frac{1}{u_1(t_0)}
+
\frac{1}{v_1(t_0)}
\right)
\neq
0
\end{equation}
for all $t_0 \in \R$ exactly as we assumed.
\end{proof}

To state our next theorem we extend the vector fields $\vuu$ and $\vv$ along $\gamma$ to vector fields in $\R^2$ so that their integral curves are straight lines. The extensions are still denoted by $\vuu$ and $\vv$. Using the extensions we define the variable coefficient partial differential operators
\begin{equation}
P_\vuu(p) \coloneqq \vuu(p) \cdot \nabla
\quad\text{and}\quad
P_\vv(p) \coloneqq \vv(p) \cdot \nabla.
\end{equation}
Let us also define the matrix
\begin{equation}
A_{\vuu,\vv}(p)
\coloneqq
\begin{pmatrix}
\vuu(p) & \vv(p)
\end{pmatrix}
=
\begin{pmatrix}
u_1(p) & v_1(p) \\
u_2(p) & v_2(p)
\end{pmatrix}.
\end{equation}
Theorem~\ref{thm:gauge-curved-2d} requires the assumption that $f \in C^\infty_c(\R^2)$ satisfies 
\begin{equation}
\label{eqn:extra-assumption-curved}
\diver(\vuu(p))
\int_0^\infty
f(p + s\vuu(p))
\,ds
-
\diver(\vv(p))
\int_0^\infty
f(p + s\vv(p))
\,ds
=
0
\quad
\text{for all}
\quad
p \in \R^2.
\end{equation}
However, as opposed to Theorem~\ref{thm:inversion-smooth-curve} we do not need as restrictive assumptions on the curve $\gamma$. In fact, the restrictions imposed on $\gamma$ are quite implicit we are assuming that the vector fields $\vuu$ and $\vv$ can be extended with integral curves as straight lines. However, at the same time we are assuming that equations~\eqref{eqn:assumptions-on-u-v-curve-case} are valid.

\begin{theorem}[Gauge description for gliding on a smooth curve]
\label{thm:gauge-curved-2d}
Assume that the vector fields are smooth and always linearly independent. Suppose that $\vuu$ and $\vv$ can be extended to $\R^2$ smoothly so that their integral curves are straight lines. Let $f \in C^\infty_c(\R^2)$ be a function satisfying~\eqref{eqn:extra-assumption-curved} and assume that $f(\gamma(t)) = 0$ for all $t \in \R$. Then the following are equivalent:
\begin{enumerate}
    \item[(i)] The $\gamma$-head-wave transform of $f$ vanishes i.e. $\mathcal{R}f(t_0,d) = 0$ for all $t_0 \in \R$ and $d \in \R_+$.

    \item[(ii)] There exists a function $\varphi \in C^\infty_c(\R^2)$ vanishing along $\gamma$ so that
    \begin{equation}
    P_\vv(p)(\det(A_{\vuu,\vv}(p))^{-1}P_\vuu(p)\varphi(p))
    =
    f(p)
    \end{equation}
    for all $p \in \R^2$ and also
    \begin{equation}
    P_\vuu(p)(\det(A_{\vuu,\vv}(p))^{-1}P_\vv(p)\varphi(p))
    =
    f(p)
    \end{equation}
    for all $p \in \R^2$.
\end{enumerate}
\end{theorem}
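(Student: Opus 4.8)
The plan is to treat this as the curved analogue of Theorem~\ref{thm:gauge-2d-var-angles}, replacing the flat gliding surface $\R\times\{0\}$ by the curve $\gamma$ and the support hypothesis by the assumption $f(\gamma(t))=0$. As there, $(2)\Rightarrow(1)$ is the easy direction. Setting $\psi_\vuu\coloneqq\det(A_{\vuu,\vv})^{-1}P_\vv\varphi$ and $\psi_\vv\coloneqq\det(A_{\vuu,\vv})^{-1}P_\vuu\varphi$, statement $(2)$ reads $P_\vuu\psi_\vuu=f=P_\vv\psi_\vv$. Since the extensions of $\vuu$ and $\vv$ have straight integral curves, each is constant along its own flow line, so $s\mapsto\psi_\vuu(p+s\vuu(p))$ has derivative $f(p+s\vuu(p))$; the first and third integrals in $\mathcal{R}f(t_0,d)$ therefore telescope to boundary values on $\gamma$, while the middle integral vanishes because $f\circ\gamma\equiv0$. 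This yields $\mathcal{R}f=0$ by the fundamental theorem of calculus, exactly as in Theorem~\ref{thm:gauge-2d-var-angles}.

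For the substantive direction $(1)\Rightarrow(2)$, I would first build the canonical primitives $\psi_\vuu(p)\coloneqq-\int_0^\infty f(p+s\vuu(p))\,ds$ and $\psi_\vv(p)\coloneqq-\int_0^\infty f(p+s\vv(p))\,ds$, which lie in $C^\infty_c(\R^2)$ and satisfy $P_\vuu\psi_\vuu=f=P_\vv\psi_\vv$ by the straight-line property. Using $f\circ\gamma\equiv0$ to discard the middle integral, the vanishing of the transform becomes $\psi_\vuu(\gamma(t_0))+\psi_\vv(\gamma(t_0+d))=0$ for all $t_0\in\R$ and $d\ge0$. Fixing $t_0$ and letting $d\to\infty$ so that $\gamma(t_0+d)$ escapes the compact support of $\psi_\vv$ forces $\psi_\vuu\circ\gamma\equiv0$, and then $\psi_\vv\circ\gamma\equiv0$ as well; here I would use that $\gamma$ is proper, which is the curved replacement for the flat step ``$x+d\to\infty$''.

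Next I would carry out the closed-form construction. Define the compactly supported $1$-form $\omega\coloneqq(v_2\psi_\vv-u_2\psi_\vuu)\,dx+(u_1\psi_\vuu-v_1\psi_\vv)\,dy$, whose exterior derivative computes to $d\omega=(\diver(\vuu)\psi_\vuu-\diver(\vv)\psi_\vv+P_\vuu\psi_\vuu-P_\vv\psi_\vv)\,dx\wedge dy$. The pair $P_\vuu\psi_\vuu-P_\vv\psi_\vv=f-f$ cancels, and the remaining divergence terms vanish precisely by hypothesis~\eqref{eqn:extra-assumption-curved}, since $\int_0^\infty f(p+s\vuu(p))\,ds=-\psi_\vuu$ and likewise for $\vv$. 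Thus $\omega$ is closed and compactly supported, and the Poincar\'e lemma gives $\varphi\in C^\infty_c(\R^2)$ with $d\varphi=\omega$. Because $\psi_\vuu$ and $\psi_\vv$ vanish along $\gamma$, both components of $\omega$ vanish there, so $\gamma^*\omega=0$ and $\tfrac{d}{dt}\varphi(\gamma(t))=\omega(\dot\gamma(t))=0$; as $\gamma$ is connected and $\varphi$ compactly supported, $\varphi\circ\gamma\equiv0$. A direct computation then gives $P_\vuu\varphi=u_1\omega_1+u_2\omega_2=\det(A_{\vuu,\vv})\psi_\vv$ and $P_\vv\varphi=\det(A_{\vuu,\vv})\psi_\vuu$, whence $f=P_\vv\psi_\vv=P_\vv(\det(A_{\vuu,\vv})^{-1}P_\vuu\varphi)$ and symmetrically $f=P_\vuu(\det(A_{\vuu,\vv})^{-1}P_\vv\varphi)$, which is statement $(2)$.

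I expect the main obstacle to be the geometry of $\gamma$ rather than any single computation. The telescoping of the first and third integrals couples the straight $\vuu$- and $\vv$-rays, on which the fields are constant by the straight-line hypothesis, with base points that travel along the \emph{curved} $\gamma$, and one must verify that the primitives $\psi_\vuu,\psi_\vv$ vanish along all of $\gamma$ and not merely at one point. This hinges on $\gamma$ being proper and connected, which plays the role that the unboundedness of the flat surface $\R\times\{0\}$ played in Theorem~\ref{thm:gauge-2d-var-angles}; once it is secured, pinning $\varphi\circ\gamma\equiv0$ from $\gamma^*\omega=0$ and invoking the Poincar\'e lemma proceeds in complete analogy with the flat case.
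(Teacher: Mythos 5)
Your proposal follows essentially the same route as the paper's proof: the same primitives $\psi_{\vuu},\psi_{\vv}$ built by integrating $f$ along the straight integral curves, the same $1$-form $\omega=(v_2\psi_\vv-u_2\psi_\vuu)\,dx+(u_1\psi_\vuu-v_1\psi_\vv)\,dy$ shown to be closed via hypothesis~\eqref{eqn:extra-assumption-curved}, the Poincar\'e lemma for compactly supported closed forms, and the identities $P_\vuu\varphi=\det(A_{\vuu,\vv})\psi_\vv$ and $P_\vv\varphi=\det(A_{\vuu,\vv})\psi_\vuu$. It is correct, and in fact makes explicit two details the paper glosses over: the separate vanishing of the $\vuu$- and $\vv$-ray integrals (which needs $\gamma$ to be proper so that $\gamma(t_0+d)$ leaves $\supp\psi_\vv$) and the verification that $\varphi$ vanishes along $\gamma$ via $\gamma^*\omega=0$.
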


\begin{proof}
Assume first that (ii) holds. Then there are functions $\psi_1(p)$ and $\psi_2(p)$ so that
\begin{equation}
f(p) = P_{\vv}(p)\psi_1(p)
\quad\text{and}\quad
f(p) = P_{\vuu}(p)\psi_2(p).
\end{equation}
Using the fact that the integral curves of $\vv$ and $\vuu$ are straight lines, it follows that
\begin{equation}
\mathcal{R}f(t_0,d)
=
\int_0^\infty
\vuu(t_0)\cdot\nabla\psi_2(\gamma(t_0) + s\vuu(t_0))
\,\D s
+
\int_0^\infty
\vv(t_0+d)\cdot\nabla\psi_1(\gamma(t_0+d) + s\vv(t_0+d))
\,\D s.
\end{equation}
This expression equals $0$ due to the fundamental theorem of calculus. Hence (i) is satisfied.

To prove the other direction suppose that $\mathcal{R}f(t_0,d) = 0$ for all $t_0 \in \R$ and $d \in \R_+$. Since $f(\gamma(t)) = 0$ for all $t \in \R$ this vanishing implies that
\begin{equation}
\label{eqn:sperate-vanishing-curved}
\int_0^\infty
f(\gamma(t_0) + s\vuu(t_0))
\,ds
=
0
\quad
\text{and}
\quad
\int_0^\infty
f(\gamma(t_0) + s\vv(t_0))
\,ds
=
0
\end{equation}
for all $t_0 \in \R$. We define the auxiliary functions
\begin{equation}
\psi_\vuu(p)
\coloneqq
\int_0^\infty
f(p + s\vuu(p))
\,ds
\quad\text{and}\quad
\psi_\vv(p)
\coloneqq
\int_0^\infty
f(p + s\vv(p))
\,ds
\end{equation}
for all $p \in \R^2$. Since $f$ is smooth and compactly supported, so are $\psi_\vuu$ and $\psi_\vv$. In addition, $\psi_\vuu(\gamma(t)) = 0$ and $\psi_\vv(\gamma(t)) = 0$ for all $t \in \R$ due to equation~\eqref{eqn:sperate-vanishing-curved}. A simple calculation verifies that $P_\vuu(p)\psi_\vuu(p) = f(p)$ and $P_\vv(p)\psi_\vv(p) = f(p)$.

We use these auxiliary functions to construct the smooth and compactly supported $1$-form $\omega \coloneqq \omega_1dx + \omega_2dy$ where
\begin{equation}
\omega_1(p)
\coloneqq
v_2(p)\psi_\vv(p)
-
u_2(p)\psi_\vuu(p)
\quad
\text{and}
\quad
\omega_2(p)
\coloneqq
u_1(p)\psi_\vuu(p)
-
v_1(p)\psi_\vv(p)
\end{equation}
for all $p \in \R^2$. Then exactly as in the proof of Theorem~\ref{thm:gauge-2d-var-angles} we can prove that $\omega$ is closed, due to assumption~\eqref{eqn:extra-assumption-curved} on the function $f$. Thus $\omega$ is exact i.e. there is a smooth and compactly supported function $\varphi \in C^\infty_c(\R^2)$ so that $d\varphi = \omega$. It follows that
\begin{equation}
P_\vuu(p)\varphi
=
u_1(p)\omega_1(p) + u_2(p)\omega_2(p)
=
\det(A_{\vuu,\vv}(p))\psi_\vv
\end{equation}
and thus
\begin{equation}
P_\vv(p)(\det(A_{\vuu,\vv}(p))^{-1}P_\vuu(p)\varphi(p))
=
f(p)
\end{equation}
for all $p \in \R^2$. A similar computation shows that
\begin{equation}
P_\vuu(p)(\det(A_{\vuu,\vv}(p))^{-1}P_\vv(p)\varphi(p))
=
f(p)
\end{equation}
for all $p \in \R^2$ proving that $\varphi$ is the function we set out to construct.
\end{proof}

\section*{Acknowledgements}

MVdH was supported by the National Science Foundation under grant DMS-2407456, the Simons Foundation under the MATH + X program and the corporate members of the Geo-Mathematical Imaging Group at Rice University. We thank the anonymous referees for the valuable remarks and suggestions.





\bibliographystyle{abbrv}
\bibliography{references}

\end{document}